
\documentclass[12pt]{amsart}
\usepackage{amssymb}
\input amssym.def
\usepackage{amsmath, amsfonts,hyperref,xcolor}
\usepackage{amscd}
\usepackage[mathscr]{eucal}
\setlength{\voffset}{-1cm} 
\setlength{\hoffset}{-2cm}
\setlength{\textwidth}{6.7in} 
\setlength{\textheight}{9in}

\setlength{\abovedisplayshortskip}{3mm}
\setlength{\belowdisplayshortskip}{3mm}

\hypersetup{colorlinks=true,citecolor={purple},linkcolor={teal},urlcolor={violet}}


\newcommand{\N}{{\mathbb N}}
\newcommand{\Z}{{\mathbb Z}}
\newcommand{\Q}{{\mathbb Q}}
\newcommand{\C}{{\mathbb C}}
\newcommand{\R}{{\mathbb R}}

\newcommand{\z}{\zeta}

\newtheorem{thm}{Theorem}
\newtheorem{lem}{Lemma}
\newtheorem{cor}{Corollary}
\newtheorem{prop}{Proposition}
\newtheorem{rmk}{Remark}
\newtheorem{defn}{Definition}
\newcommand{\thmref}[1]{Theorem~\ref{#1}}
\newcommand{\propref}[1]{Proposition~\ref{#1}}
\newcommand{\lemref}[1]{Lemma~\ref{#1}}

\newcommand{\rmkref}[1]{Remark~\ref{#1}}

\parindent=0.5cm
\footskip=0.85cm

\begin{document}

\title[Multiple Lerch zeta functions]
{Multiple Lerch zeta functions and an 
idea of Ramanujan}

\author{Sanoli Gun and Biswajyoti Saha}

\address{Sanoli Gun\\ \newline
Institute of Mathematical Sciences, C.I.T. Campus, Taramani, 
Chennai, 600 113, India.}
\email{sanoli@imsc.res.in}

\address{Biswajyoti Saha\\ \newline
Tata Institute of Fundamental Research,
Homi Bhabha Road, Navy Nagar, Colaba,
Mumbai, 400 005, India.}
\email{biswa@math.tifr.res.in}

\subjclass[2010]{11M32, 11M35, 32Dxx}

\keywords{multiple Lerch zeta functions, meromorphic 
continuations, polar hyperplanes, an identity of Ramanujan}

\begin{abstract}
In this article, we derive meromorphic continuation
of multiple Lerch  zeta functions by generalising an 
elegant identity of Ramanujan. Further, we
describe the set of all possible singularities of 
these functions. Finally,  for 
the multiple Hurwitz zeta functions, we
list the exact set of singularities. 
\end{abstract}

\maketitle

\section{Introduction and statements of the theorems}

In 1917, Ramanujan introduced a novel idea
which enabled him to derive an elegant functional
equation of the classical Riemann zeta function.
He showed that for $\Re(s)>1$,
the Riemann zeta function
$\zeta(s):=\sum_{n \ge 1} \frac{1}{n^s}$
satisfies the following formula:
\begin{equation}\label{rama}
1=\sum_{k\ge 0} (s-1)_k (\z(s+k)-1),
\end{equation}
where the right hand side of \eqref{rama} converges 
normally on any compact subset of $\Re(s) > 1$
and
$$
(s)_k := \frac{s \cdots (s+k)}{(k+1)!}
$$
for any $k \ge 0$ and $s \in \C$.
An elementary proof of this formula,
as suggested by Ecalle \cite{JE},
can be deduced from the identity
$$
(n-1)^{1-s}-n^{1-s}= \sum _{k\ge 0} (s-1)_k \ n^{-s-k},
$$
which is valid for any natural number $n \ge 2$
and any $s \in \C$. 

In fact, Ecalle \cite{JE} also suggested how one
can derive a formula similar to \eqref{rama},
for the multiple zeta functions. 
Following Ecalle's indication, the last author
along with Mehta and Viswanadham \cite{MSV} 
derived such a formula for the multiple zeta functions
and studied the meromorphic continuations
as well as the set of polar singularities
of them (see \cite{MSV} and \cite{JO} for details).

Meromorphic continuations of the multiple zeta functions
was proved first by Zhao~\cite{JZ}. Around the same time,
Akiyama, Egami and Tanigawa \cite{AET} gave an alternate
proof of meromorphic continuations along with the exact set
of polar hyperplanes for these functions.  In~\cite{MSV},
the last author along with Mehta and Viswanadham
introduced the method of matrix formulation
to write the down the residues of the multiple
zeta functions in a computable form, and thereby
reproved the theorem of Akiyama, Egami and Tanigawa.  

In this paper, we generalise the identity of Ramanujan 
to obtain meromorphic continuations
as well as the set of possible singularities
of the multiple Lerch zeta functions (defined below). 
When $r=1$, it was done by the last author in \cite{BS}.
Let $r > 0 $ be a natural number 
and $U_r$ be an open subset of $\C^r$ defined
as follows:
$$
U_r := \{ (s_1, \ldots, s_r) \in \C^r ~|~ \Re(s_1 + \cdots + s_i) > i
~\text{ for all }~ 1 \le i  \le r \}.
$$
Then for real numbers $ \lambda_1, \ldots, \lambda_r,
\alpha_1, \ldots, \alpha_r  \in [0,1)$
and complex $r$-tuples $(s_1, \ldots, s_r) \in U_r$,
the multiple Lerch zeta function of depth $r$ is defined by
\begin{equation}\label{lerch}
L_r ( \lambda_1, \ldots, \lambda_r;
 \alpha_1, \ldots, \alpha_r ;  s_1, \ldots, s_r )
:= 
\sum_{n_1>\cdots>n_r>0} \frac{e(\lambda_1 n_1)
 \cdots e(\lambda_r n_r)}
{(n_1+\alpha_1)^{s_1} \cdots (n_r + \alpha_r)^{s_r}},
\end{equation}
where $e(a) := e^{2 \pi \iota a}$ for $a \in \R$.
The series on the right hand side of \eqref{lerch}
is normally convergent on compact subsets of $U_r$
(see \propref{norm}) and hence defines a holomorphic
function there.

Before we state our theorems, let us introduce few more notations.
For integers $1 \le i \le r$ and $k \ge 0$, let
\begin{equation*}
H_{i,k} ~:=~ \{ (s_1, \ldots, s_r) \in \C^r ~|~
 s_1 + \cdots + s_i = i - k \}.
\end{equation*}
Also for $1 \le i \le r$, let
\begin{equation*}
\mu_i ~:=~ \sum_{j=1}^i \lambda_j
\end{equation*}
and $\Z_{\le j}$ denote the set of integers
less than and equal to $j$.
In this article we prove the following theorems.

\begin{thm}\label{easy}
Assume that $\mu_i \not\in \Z$ for all $1 \le i \le r$. Then
$L_r(\lambda_1,\ldots,\lambda_r; 
\alpha_1,\ldots,\alpha_r; s_1,\ldots,s_r)$ can be extended
analytically to the whole of $\C^r$. 
\end{thm}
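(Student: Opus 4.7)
The plan is to proceed by induction on the depth $r$, with the inductive step built around a Lerch analogue of Ramanujan's identity \eqref{rama}. The non-integrality hypothesis $\mu_i \notin \Z$ will be invoked at each stage to guarantee that a denominator $e(\mu_i)-1$, arising when one inverts an Abel summation, is nonzero. The base case $r = 1$ is the classical fact that the Lerch zeta function with non-integral first parameter is entire in $s_1$, treated in detail in \cite{BS}.

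Starting from Ecalle's identity $(n-1)^{1-s} - n^{1-s} = \sum_{k \ge 0}(s-1)_k n^{-s-k}$ shifted by $\alpha_1$, multiplying by $e(\lambda_1 n)$, summing from $n = n_2 + 1$ and applying Abel summation (valid because $\mu_1 = \lambda_1 \notin \Z$, so $e(\lambda_1) \ne 1$), I obtain
$$\sum_{n_1 > n_2}\frac{e(\lambda_1 n_1)}{(n_1+\alpha_1)^{s_1}}
= \frac{1}{e(\lambda_1)-1}\Bigl[\sum_{k \ge 0}(s_1)_k \sum_{n_1 > n_2}\frac{e(\lambda_1 n_1)}{(n_1+\alpha_1)^{s_1+k+1}} - \frac{e(\lambda_1(n_2+1))}{(n_2+\alpha_1)^{s_1}}\Bigr].$$
Inserting this formula into the definition of $L_r$ and summing over $n_2 > \cdots > n_r > 0$ yields the decomposition
$$(e(\lambda_1) - 1)\, L_r \;=\; \sum_{k \ge 0}(s_1)_k\, L_r(\unl; \una; s_1+k+1, s_2, \ldots, s_r) \;-\; e(\lambda_1)\, P,$$
where $P$ is the depth-$r$ sum in which the $n_1$-variable has been absorbed into $n_2$: its $n_2$-phase is $e((\lambda_1+\lambda_2) n_2)$ and its $n_2$-denominator is $(n_2+\alpha_1)^{s_1}(n_2+\alpha_2)^{s_2}$. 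The binomial expansion
$(n_2+\alpha_1)^{-s_1} = \sum_{j \ge 0}\binom{-s_1}{j}(\alpha_1-\alpha_2)^j(n_2+\alpha_2)^{-s_1-j}$
converges uniformly in $n_2 \ge 1$ (since $|\alpha_1-\alpha_2| < 1$) and rewrites $P$ as a series in $j$ of honest depth-$(r-1)$ Lerch values
$L_{r-1}(\lambda_1+\lambda_2, \lambda_3, \ldots, \lambda_r; \alpha_2, \ldots, \alpha_r; s_1+s_2+j, s_3, \ldots, s_r);$
the partial sums of the new $\lambda$-parameters are exactly $\mu_2, \mu_3, \ldots, \mu_r$, still non-integral by hypothesis, so the inductive hypothesis makes every such $L_{r-1}$ entire and hence $P$ is entire on $\C^r$.

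It then remains to show that $\sum_{k \ge 0}(s_1)_k L_r(\unl; \una; s_1+k+1, s_2, \ldots, s_r)$ extends to an entire function. For $k$ larger than a bound depending only on a given compact set in $(s_1, \ldots, s_r)$-space, the shifted argument lies inside $U_r$, so the corresponding Lerch value is a convergent series that decays geometrically in $k$; combined with the at-most-polynomial growth of $(s_1)_k$, this tail is holomorphic on the compactum. The finitely many small-$k$ terms are continued by reapplying the identity, each iteration pushing the first argument further into $U_r$, so the recursion terminates. The principal technical obstacle is the careful bookkeeping for these repeated interchanges of summation in $(n_1,\ldots,n_r,k,j)$ and the verification of normal convergence on compacta; at every stage the hypothesis $\mu_i \notin \Z$ is required so that no factor $e(\mu_i) - 1$ introduced along the way vanishes, ensuring that no spurious singularities are created and $L_r$ extends entirely to all of $\C^r$.
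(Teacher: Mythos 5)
Your proposal is correct and follows essentially the same route as the paper: your displayed identity, once $(n_2+\alpha_1)^{-s_1}$ in $P$ is binomially expanded in powers of $(n_2+\alpha_2)^{-1}$, is exactly the generalised Ramanujan identity \eqref{trans}, and your continuation of the $k$-sum by iterating the identity is the paper's induction over the exhaustion $\C^r=\bigcup_{m\ge 1}U_r(m)$, nested inside the induction on the depth $r$ with the hypothesis $\mu_i\notin\Z$ used in the same two places (to divide by $e(\lambda_1)-1$ and to apply the depth-$(r-1)$ case to the parameters $\mu_2,\lambda_3,\ldots,\lambda_r$). The normal-convergence bookkeeping you defer is precisely what Propositions \ref{lerch-1}--\ref{lerch-3} supply.
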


\begin{rmk}\label{easy-kn}\rm
If $r=1$ and $\lambda_1 \notin \Z$,
Lerch \cite{ML} showed that
$L_1(\lambda_1;\alpha_1;s_1)$ can be
extended to an entire function of $\C$. 
\end{rmk}


\begin{thm}\label{multiple-Lerch}
With the notations as above, let $i_1 < \cdots < i_{m}$ be the 
only indices for which $\mu_{i_j} \in \Z,~ 1 \le j \le m$.
\begin{itemize}
\item
If $i_1 =1$, then 
$L_r(\lambda_1,\ldots,\lambda_r;  
\alpha_1,\ldots,\alpha_r; s_1,\ldots,s_r)$
can be meromorphically continued to $\C^r$
with possible simple poles along the hyperplanes 
$$
H_{1,0} \ \text{ and } \
H_{i_j, k} ~\text{ for  }~
2 \le j \le m \ \text{ with }~(i_j -k) \in \Z_{\le j}.
$$

\item
If $i_1 \ne 1$, then $L_r(\lambda_1,\ldots,\lambda_r;  \alpha_1,
\ldots,\alpha_r; s_1,\ldots,s_r)$ 
can be meromorphically continued to $\C^r$
with possible simple poles along the hyperplanes 
$$
H_{i_j, k} ~\text{ for  }~
1 \le j \le m \ \text{ with }~(i_j -k) \in \Z_{\le j}.
$$ 
\end{itemize}
\end{thm}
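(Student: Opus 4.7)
My plan is to prove \thmref{multiple-Lerch} by induction on the depth $r$, using a Lerch analogue of Ramanujan's identity \eqref{rama} to reduce depth $r$ to depth $r-1$ together with a convergent series of shifted copies of $L_r$. The base case $r = 1$ is immediate: if $\lambda_1 \notin \Z$ then \thmref{easy} already gives entire continuation, while if $\lambda_1 \in \Z$ then $L_1$ reduces to the Hurwitz zeta function $\sum_{n \ge 1}(n+\alpha_1)^{-s_1}$, meromorphic on $\C$ with a unique simple pole along $H_{1,0}$, matching both cases of the statement.

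For the inductive step I first Abel-sum the elementary identity $(n-1+\alpha_1)^{1-s_1}-(n+\alpha_1)^{1-s_1}=\sum_{k\ge 0}(s_1-1)_k(n+\alpha_1)^{-s_1-k}$ against the weight $e(\lambda_1 n)$ over $n > n_2$; on $U_r$ the tail vanishes and one obtains
\begin{equation*}
e(\lambda_1(n_2+1))(n_2+\alpha_1)^{1-s_1}+(e(\lambda_1)-1)\,\phi(s_1-1;n_2)=\sum_{k\ge 0}(s_1-1)_k\,\phi(s_1+k;n_2),
\end{equation*}
with $\phi(s;n_2):=\sum_{n_1>n_2}e(\lambda_1 n_1)(n_1+\alpha_1)^{-s}$. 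Multiplying by $\prod_{i\ge 2}e(\lambda_i n_i)(n_i+\alpha_i)^{-s_i}$ and summing over $n_2>\cdots>n_r>0$ turns each $\phi$-term into a full multiple Lerch function, while the boundary piece becomes a depth-$(r-1)$ object once the mismatch $(n_2+\alpha_1)$ vs.\ $(n_2+\alpha_2)$ is absorbed via the convergent binomial expansion of $(n_2+\alpha_1)^{1-s_1}$ around $(n_2+\alpha_2)$, valid since $|\alpha_1-\alpha_2|<1\le n_2+\alpha_2$. The resulting depth-$(r-1)$ function has $\lambda$-tuple $(\mu_2,\lambda_3,\ldots,\lambda_r)$, hence integer-$\mu$ positions precisely $\{i_l-1 : i_l\ge 2\}$ -- the original integer indices shifted down by one, with $i_1=1$ dropped if present.

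The proof then splits according to whether $\mu_1\in\Z$. If $\mu_1\notin\Z$ (case $i_1\ne 1$), the factor $e(\lambda_1)-1$ is invertible and one solves for $L_r(\ldots;s_1-1,s_2,\ldots,s_r)$, extending $L_r$ one unit to the left in $\Re(s_1)$; iteration continues $L_r$ meromorphically to all of $\C^r$, and the only new poles inherited at this step come from the depth-$(r-1)$ boundary. If $\mu_1\in\Z$ (case $i_1=1$), then $e(\lambda_1)-1=0$ and the identity instead collapses to
\begin{equation*}
(s_1-1)\,L_r(\ldots;s_1,s_2,\ldots,s_r)= B(s_1,\ldots,s_r)-\sum_{k\ge 1}(s_1-1)_k L_r(\ldots;s_1+k,s_2,\ldots,s_r),
\end{equation*}
producing a simple pole along $H_{1,0}$ directly and inheriting the remaining poles from the boundary $B$.

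The main obstacle I anticipate is the combinatorial bookkeeping required to match the inherited poles with the prescribed set. A pole of the depth-$(r-1)$ function along one of its hyperplanes $H_{i',k'}$, combined with binomial index $\beta\ge 0$ and (in the $i_1\ne 1$ case) the shift $s_1\mapsto s_1-1$, contributes in the original variables a pole along $H_{i'+1,\,k'+\beta+1}$; together with the direct $H_{1,0}$ contribution in the $i_1=1$ case, one verifies by induction that the constraint $(i_j-k)\in\Z_{\le j}$ is preserved under the reindexing $i_l\mapsto i_l-1$ and $j\mapsto j$ or $j\mapsto j-1$ according to the case. The analytic ingredients -- normal convergence in $U_r$ of the Ramanujan-style series, validity of the binomial expansion, and the permissibility of interchanging summations -- are routine and parallel to the treatments in \cite{BS} for depth one and \cite{MSV} for multiple zeta values.
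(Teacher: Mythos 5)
Your proposal is correct and follows essentially the same route as the paper: the weighted telescoping you perform over $n_1>n_2$ is just a repackaging of the paper's generalised Ramanujan identity (Theorem 4, obtained there by expanding $(n_1+\alpha_1-1)^{-s_1}$ and shifting the index $n_1$), and your induction on depth combined with the case split on $e(\lambda_1)=1$, the step-by-step extension across the exhaustion $U_r(m)$ of $\C^r$, and the bookkeeping $H_{i',k'}\mapsto H_{i'+1,k'+\beta+1}$ (resp.\ $H_{i'+1,k'+\beta}$ in the $i_1=1$ case, plus the direct $H_{1,0}$ from the factor $s_1-1$) reproduce the paper's argument, including the verification that the constraint $(i_j-k)\in\Z_{\le j}$ is stable under the reindexing. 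The analytic estimates you defer to \cite{BS} and \cite{MSV} are exactly the paper's Propositions 2--4.
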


\begin{rmk}\label{multiple-Lerch-kn}\rm
\thmref{multiple-Lerch} is well known in
the special case when $r=1$.
In this case, $L_1(\lambda_1; \alpha_1, s_1)$ where
$\lambda_1 \in \Z$ is essentially the Hurwitz zeta function
and hence can be extended analytically to $\C$, except at $s=1$, 
where it has a simple pole with residue $1$.
\end{rmk}

Komori \cite{YK} considered certain several variable 
generalisations of the Lerch zeta function and derived
meromorphic continuations 
of these functions through integral representation.
He also obtained a rough estimation of their
possible singularities (see \cite{YK}, \S 3.6).

Now if we choose $\lambda_i = 0$ for 
$1 \le i \le r$ in \eqref{lerch}, then we get
$$
L_r (0, \ldots, 0; \alpha_1, \ldots, \alpha_r;
s_1, \ldots, s_r)
= 
\z_r(s_1, \ldots, s_r; \alpha_1, \ldots, \alpha_r),
$$
the multiple Hurwitz zeta function of depth $r$,
and further if $\alpha_i =0$ 
for $1 \le i \le r$, we get
$$
L_r (0, \ldots, 0;  0, \ldots, 0; s_1, \ldots, s_r)
= 
\z_r(s_1, \ldots, s_r),
$$
the multiple zeta function of depth $r$.

Akiyama and Ishikawa \cite{AI} obtained the
meromorphic continuation of the multiple
Hurwitz zeta functions together with their possible
polar singularities. In the special case when 
$\alpha_i \in \Q$ for $1 \le i \le r$,
they also derived the exact set of singularities.
This has also been done \cite{MV}.
Using Mellin-Barnes integral formula, Matsumoto \cite{KM}
 showed meromorphic continuation of 
 multiple Hurwitz zeta functions with possible set of singularities.
 Finally we refer to the interested reader the following papers, namely
 \cite{FKMT} and \cite{TO} where similar themes
 are addressed. 
An expression for residues along these possible polar hyperplanes
were obtained in \cite{KM,MV}.
For the multiple Hurwitz zeta functions,
we are now able to characterise the exact
set of singularities. This complete characterisation
is new.  More precisely, we have the following theorem.

\begin{thm}\label{multiple-Hurwitz}
The multiple Hurwitz zeta function 
$\z_r(s_1, \ldots, s_r;\alpha_1, \ldots, \alpha_r)$
has meromorphic continuation to $\C^r$. Further,
all its poles are simple and they are
along the hyperplanes 
$$
H_{1,0}
\phantom{m}\text{and}\phantom{m}
~H_{i,k}~ 
\text{ for }~
2 \le i \le r, ~k \ge 0
$$
except when $i =2, ~k \in K$, where
$$
K := \{ n \in \N ~|~  B_{n}(\alpha_2 - \alpha_1)=0 \}
$$
and $B_{n} (t)$ denotes the $n$-th Bernoulli polynomial defined by
generating series
$$
\frac{xe^{tx}}{e^x -1} = \sum_{ n \ge 0} B_{n}(t)\frac{x^{n}}{ n!}.
$$
\end{thm}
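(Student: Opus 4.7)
Since $\lambda_i = 0$ for every $i$, each $\mu_i = 0 \in \Z$, so \thmref{multiple-Lerch} applies with $m = r$ and $i_j = j$. Its first bullet immediately yields meromorphic continuation of $\z_r$ to $\C^r$ with only simple poles, all of which lie among $H_{1,0}$ and the family $\{H_{i,k} : 2 \le i \le r,\ k \ge 0\}$. The content of the theorem is therefore to decide which of these candidate hyperplanes actually carry a non-zero residue.

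My plan is to extract explicit residue formulas from the proof of \thmref{multiple-Lerch}, that is, from iterated application of the Ramanujan-type telescoping
$$
(n-1+\alpha)^{1-s} - (n+\alpha)^{1-s} ~=~ \sum_{k\ge 0}(s-1)_k\,(n+\alpha)^{-s-k}
$$
specialised to $\lambda_i = 0$. The outcome should be, for each candidate pole,
$$
\mathrm{Res}_{H_{i,k}} \z_r ~=~ F_{i,k}(s_1,\ldots,s_{i-1};\alpha_1,\ldots,\alpha_i) \,\cdot\, \z_{r-i}(s_{i+1},\ldots,s_r;\alpha_{i+1},\ldots,\alpha_r),
$$
where $F_{i,k}$ is a polynomial in $s_1,\ldots,s_{i-1}$ whose coefficients are combinations of Pochhammer symbols and values $B_\ell(\alpha_p - \alpha_{p-1})$ of Bernoulli polynomials at the consecutive differences. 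The tail factor $\z_{r-i}$ is not identically zero (use its defining Dirichlet series for large $\Re(s_{i+1}),\ldots,\Re(s_r)$, or induct on depth), so the question reduces to whether $F_{i,k}\equiv 0$ as a polynomial in the free $s_j$'s.

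The two easy cases are $i=1$, where $F_{1,0}\equiv 1$ and $H_{1,0}$ is always a pole, and $i=2$, where two iterations of the telescoping isolate a single Bernoulli polynomial and yield $F_{2,k}(s_1;\alpha_1,\alpha_2) = c_k(s_1)\,B_k(\alpha_2-\alpha_1)$ with $c_k(s_1)$ a nowhere-vanishing Pochhammer/rational factor. Thus $H_{2,k}$ is a pole exactly when $B_k(\alpha_2-\alpha_1)\ne 0$, accounting for the exceptional set $K$ in the statement. The main obstacle is the case $i\ge 3$, where one must prove that $F_{i,k}$ is a nontrivial polynomial in $s_1,\ldots,s_{i-1}$ for every choice of the $\alpha_p$'s. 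Here my strategy is to single out the monomial of largest total degree in $s_1,\ldots,s_{i-1}$: because such a monomial arises uniquely from picking the top-order Pochhammer contribution at each step of the iterated telescoping, its coefficient is a single, uncancelled product in which all but one Bernoulli factor is $B_0 = 1$. This leading coefficient is therefore a nonzero scalar, independent of the $\alpha_p$'s, so $F_{i,k}\not\equiv 0$ and $H_{i,k}$ is a genuine pole for every $i\ge 3$ and $k\ge 0$. Putting the three cases together gives the exact description of the polar locus stated in the theorem.
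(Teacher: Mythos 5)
Your overall architecture is the same as the paper's: get the candidate polar set from \thmref{multiple-Lerch}, compute the residue along $H_{i,k}$ as (restriction of) $\z_{r-i}(s_{i+1},\ldots,s_r;\alpha_{i+1},\ldots,\alpha_r)$ times a rational function $F_{i,k}$ in $s_1,\ldots,s_{i-1}$ built from Pochhammer symbols and Bernoulli values at consecutive differences, and reduce the theorem to the (non-)vanishing of $F_{i,k}$. The cases $i=1$ and $i=2$ are handled correctly and as in the paper.

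The gap is in your treatment of $i\ge 3$. Expanding the iterated identity, $F_{i,k}$ is a sum over compositions $k=j_1+\cdots+j_{i-1}$ with $j_d\ge 0$, the term for a given composition being
$$
\prod_{d=1}^{i-1} c_{j_d}(s_1+\cdots+s_d)\,B_{j_d}(\alpha_{d+1}-\alpha_d),
$$
where $c_{j}$ has degree $j-1$ (the $j=0$ factor being $1/(s_1+\cdots+s_d-d+\cdots)$). Hence \emph{every} term has total degree $\sum_d(j_d-1)=k-(i-1)$ in $s_1,\ldots,s_{i-1}$: there is no term of strictly largest total degree, so the "unique top-order monomial" you invoke does not exist. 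If instead you pick the extremal term for some monomial order (say, maximal degree in $s_1$, which forces $j_1=k$ and $j_2=\cdots=j_{i-1}=0$), its coefficient is $\tfrac{1}{k!}B_k(\alpha_2-\alpha_1)$ — all other Bernoulli factors are $B_0=1$, exactly as you say, but the one remaining factor is \emph{not} a scalar independent of the $\alpha_p$'s: it vanishes precisely when $k\in K$, which is the very situation the theorem must survive for $i\ge3$. So your leading-coefficient argument proves nothing in the critical case. What is actually needed is that the coefficients $\prod_d B_{j_d}(\alpha_{d+1}-\alpha_d)$ cannot \emph{all} vanish simultaneously (the terms for distinct compositions are linearly independent, so one non-zero coefficient suffices). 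This is a genuine arithmetic fact about zeros of Bernoulli polynomials, not a formal consequence of the expansion: the paper derives it for $i=3$ from the Brillhart--Dilcher theorem that Bernoulli polynomials have no multiple roots (so $B_n$ and $B_{n-1}$ share no root), combined with the observation that $\alpha_2-\alpha_1$ and $\alpha_3-\alpha_2$ cannot both equal $1/2$ when all $\alpha_p\in[0,1)$; the case $i>3$ then follows by an inductive column argument (Lemma~\ref{any-prod}). Without an input of this kind your proof of non-vanishing for $i\ge3$ does not go through.
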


 Before proceeding further, we indicate, compare and contrast some 
 of the other existing works vis-{\`a}-vis our  work.
 In \cite{MS}, the authors obtain meromorphic continuation for  
 multiple Hurwitz zeta function of an arbitrary depth $r$ using 
 Binomial expansion. In order to do so, they deduce 
 a functional equation involving various Multiple Hurwitz zeta 
 functions of a fixed depth $r$ (see Theorem 5.2).  The novelty of
 our work is to deduce a functional equation involving 
 Multiple Hurwitz zeta functions of depth $r$ with Multiple
 Hurwitz zeta functions of depth $r-1$ (see Theorem 4).  
 This is the crucial ingredient which enables us to derive information 
 about the poles and residues of such functions,  which was not
 done in \cite{MS}.  The use of Binomial expansion has
 also been exploited in \cite{DE} for 
 proving the meromorphic continuation of multiple Hurwitz zeta
 functions. More precisely, he uses products
 of Binomial expansions which we avoid. 
 Also he deals only with the 
 diagonal vectors
 in the $r$-dimensional complex plane  while we allow 
 arbitrary vectors in $\C^r$. Furthermore, the author does
 not deal with the poles and residues of these functions.

 The paper is distributed as follows. In the next section, 
 we prove some intermediate 
 results and derive functional identities for the multiple
 Lerch zeta function which is a generalisation
 of the identity of Ramanujan (see \thmref{gen-rama}). 
 In Section 3, we derive 
 meromorphic continuation of the multiple Lerch
 zeta functions as well as their possible set
 of singularities using these functional identities.
 In Section 4, we follow \cite{MSV} to write down the
 relevant functional identity for the multiple
 Hurwitz zeta functions in terms of infinite matrices, in order to
 obtain an expression for residues along the singular
 hyperplanes (see \thmref{residues-mhzf}).
 Finally in Section 5, we complete the
 proof of \thmref{multiple-Hurwitz}. For this we need
 to use some fundamental properties of the zeros of the
 Bernoulli polynomials. These results are discussed
 in  \S5.1.

\section{Intermediate results and generalised Ramanujan's identity}

In this section, we derive an analogue of \eqref{rama}
(see \eqref{trans} below) for the multiple Lerch zeta functions.
In order to establish \eqref{trans} we need some intermediate results.
Before we state our theorem, we start with the notion of normal
convergence.

\begin{defn}
Let $X$ be a set and $(f_i)_{i \in I}$ be a 
family of complex valued functions
defined on $X$. We say that the family of 
functions $(f_i)_{i \in I}$
is normally summable on $X$ or the series $\sum_{i \in I} f_i$ 
converges normally on $X$ if 
$$
\|f_i\|_X := \sup_{x \in X} |f(x)| < \infty ,
~\text{      for all  }i \in I
$$ 
and the family of real numbers 
$(\| f_i \|_X)_{i \in I}$ is summable. 
\end{defn}

%
%
%
%
%
\begin{defn}
Let $X$ be an open subset of $\C^r$ and $(f_i)_{i \in I}$ be a
family of meromorphic functions on $X$. We say that
$(f_i)_{i \in I}$ is normally summable or $\sum_{ i \in I} f_i$
is normally convergent on all compact
subsets of $X$ if for any compact subset $K$ of $X$,
there exists a finite set $J \subset I$ such that
each $f_i$ for $i \in I \setminus J$ is holomorphic in an open 
neighbourhood of $K$ and the family 
$(f_i|K)_{ i \in {I \setminus J}}$ is normally summable
on $K$. In this case, $\sum_{ i \in I} f_i$ is a well
defined meromorphic function on~$X$.
\end{defn}

We now have the following theorem.

\begin{thm}\label{gen-rama}
Let $r \ge 2$ be a natural number, $\lambda_1, \ldots, \lambda_r, 
\alpha_1, \ldots, \alpha_r \in [0,1)$.
Then for any $(s_1, \ldots, s_r) \in U_r$, we have
\begin{equation}\label{trans}
\begin{split}
&e(\lambda_1) \sum_{k \ge -1} (s_1)_k (\alpha_2-\alpha_1)^{k+1}
L_{r-1}(\mu_2,\lambda_3,\ldots,\lambda_r; 
\alpha_2, \ldots, \alpha_r;
s_1+s_2+k+1,s_3,\ldots,s_r)\\
&= (1- e(\lambda_1)) L_r(\lambda_1,\ldots,\lambda_r; 
\alpha_1, \ldots, \alpha_r;
s_1,\ldots,s_r)\\
&+ \sum _{k\ge 0} (s_1)_k L_r(\lambda_1,\ldots,\lambda_r;
 \alpha_1, \ldots, \alpha_r;
s_1+k+1,s_2,\ldots,s_r),
\end{split}
\end{equation}
where $(s)_{-1}:=1$ and for $k \ge 0$,
$$(s)_k:=\frac{s\cdots(s+k)}{(k+1)!},$$
and the series on both sides of \eqref{trans}
converge normally on every compact subsets of $U_r$.
\end{thm}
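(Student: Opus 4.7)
The strategy is to apply the one-variable Ramanujan-type telescoping identity \eqref{rama} to the outermost index $n_1$, thereby isolating a boundary term at $n_1 = n_2 + 1$, and then to re-expand that boundary factor via a binomial series to switch the reference shift from $\alpha_1$ to $\alpha_2$. After multiplying by the remaining Lerch kernel and summing over $n_2 > n_3 > \cdots > n_r > 0$, the boundary contribution turns into the $L_{r-1}$ series on the left of \eqref{trans}, while the two partial-Lerch sums at fixed $n_2$ reassemble into the two $L_r$ terms on the right.

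First I would derive, by Taylor expanding $(1 - 1/(n+\alpha_1))^{-s_1}$, the one-variable telescoping identity
\begin{equation*}
(n-1+\alpha_1)^{-s_1} - (n+\alpha_1)^{-s_1}
~=~ \sum_{k \ge 0}(s_1)_k\,(n+\alpha_1)^{-s_1-k-1},
\end{equation*}
which is valid for $n \ge n_2+1 \ge 2$. Multiplying by $e(\lambda_1 n)$, summing over $n > n_2$, and reindexing the shifted summand via $m = n-1$ isolates a boundary contribution at $m = n_2$:
\begin{equation*}
e(\lambda_1)\,e(\lambda_1 n_2)\,(n_2+\alpha_1)^{-s_1}
~=~ (1-e(\lambda_1))\sum_{n > n_2}\frac{e(\lambda_1 n)}{(n+\alpha_1)^{s_1}}
\,+\, \sum_{k \ge 0}(s_1)_k\sum_{n > n_2}\frac{e(\lambda_1 n)}{(n+\alpha_1)^{s_1+k+1}}.
\end{equation*}

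Next I would expand the boundary factor $(n_2+\alpha_1)^{-s_1}$ about the point $n_2+\alpha_2$. Since $|\alpha_2-\alpha_1| < 1 \le n_2+\alpha_2$, the binomial theorem gives
\begin{equation*}
(n_2+\alpha_1)^{-s_1}
~=~ \sum_{k \ge -1}(s_1)_k\,(\alpha_2-\alpha_1)^{k+1}\,(n_2+\alpha_2)^{-s_1-k-1},
\end{equation*}
where the $k=-1$ term uses the convention $(s_1)_{-1}=1$. Multiplying the full identity by $\prod_{i \ge 2} e(\lambda_i n_i)(n_i+\alpha_i)^{-s_i}$ and summing over $n_2 > \cdots > n_r > 0$, the right-hand side collapses directly into $(1-e(\lambda_1))\,L_r(\lambda_1,\ldots;\alpha_1,\ldots;s_1,\ldots,s_r)$ plus $\sum_{k \ge 0}(s_1)_k\,L_r(\lambda_1,\ldots;\alpha_1,\ldots;s_1+k+1,s_2,\ldots,s_r)$. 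On the left, after exchanging the sums over $k$ and over $n_2$, the factors $e(\lambda_1 n_2)\,e(\lambda_2 n_2) = e(\mu_2 n_2)$ combine, the exponent of $n_2+\alpha_2$ becomes $-s_1-s_2-k-1$, and what remains is precisely $L_{r-1}(\mu_2,\lambda_3,\ldots,\lambda_r;\alpha_2,\ldots,\alpha_r;s_1+s_2+k+1,s_3,\ldots,s_r)$, delivering \eqref{trans}.

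The main obstacle is to justify each interchange of summation and to establish the claimed normal convergence on compact $K \subset U_r$. Three elementary ingredients handle this: $|(s_1)_k|$ grows only polynomially in $k$ (by Stirling's formula applied to $(s_1)_k = \Gamma(s_1+k+1)/[\Gamma(s_1)(k+1)!]$); $|\alpha_2-\alpha_1|^{k+1}$ either vanishes or decays geometrically with ratio $< 1$; and each shifted Lerch factor appearing in the $k$-sums is uniformly bounded on $K$ by \propref{norm} (indeed, $L_r(\ldots;s_1+k+1,s_2,\ldots,s_r)$ decays like $2^{-k}$ since its defining series has $n_1 \ge 2$). These bounds dominate each series by a convergent numerical series uniformly on $K$, so Fubini for absolutely convergent sums legitimises every rearrangement and proves both \eqref{trans} and its stated normal convergence.
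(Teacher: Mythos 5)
Your proposal is correct and follows essentially the same route as the paper: the same Taylor/binomial identity $(n+\alpha_1-1)^{-s_1}=\sum_{k\ge -1}(s_1)_k(n+\alpha_1)^{-s_1-k-1}$, the same reindexing that isolates the boundary term at $n_1=n_2$ (producing the factor $e(\lambda_1)$ and the $(1-e(\lambda_1))$ coefficient), and the same re-expansion of $(n_2+\alpha_1)^{-s_1}$ in powers of $(\alpha_2-\alpha_1)/(n_2+\alpha_2)$; your convergence estimates are exactly the content of Propositions~\ref{lerch-1}--\ref{lerch-3}. The only difference is organizational: the paper evaluates the auxiliary sum with kernel $(n_1+\alpha_1-1)^{-s_1}$ in two ways and equates, whereas you telescope at fixed $n_2$ first — the two computations are identical in substance.
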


If $\lambda_1=0$, we rewrite \eqref{trans} as,
\begin{equation}\label{trans2}
\begin{split}
&\sum_{k \ge -1} (s_1-1)_k (\alpha_2-\alpha_1)^{k+1}
L_{r-1}(\lambda_2,\lambda_3,\ldots,\lambda_r; \alpha_2, \ldots, \alpha_r;
s_1+s_2+k,s_3,\ldots,s_r)\\
&= \sum _{k\ge 0} (s_1-1)_k L_r(0,\lambda_2,\ldots,\lambda_r; 
\alpha_1, \ldots, \alpha_r;
s_1+k,s_2,\ldots,s_r).
\end{split}
\end{equation}

From now on, we will call the identities \eqref{trans}
and \eqref{trans2} as the generalised Ramanujan's identity
for the multiple Lerch zeta functions. 
In order to prove \thmref{gen-rama}, we introduce
another notation and prove some intermediate results.
For any $m \ge 0$, let
$$
U_r(m):=\{(s_1, \ldots, s_r) \in \C^r 
~|~ \Re(s_1+ \cdots + s_i) > i - m
~\text{ for all }~ 1\leq i \leq r \}.
$$
Note that $U_r = U_r(0)$. We first observe
that the series on the right hand side of
\eqref{lerch} is normally convergent on
compact subsets of $U_r$. For this
we need the following lemma from \cite{MSV}.

\begin{lem}\label{mzf}
For an integer $r \ge 1$, the family of functions
$$
\left ( \frac{1}{n_1^{s_1} \cdots n_r^{s_r}} \right )_{n_1>\cdots>n_r>0}
$$
converges normally on any compact subset of $U_r$.
\end{lem}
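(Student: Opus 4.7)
The plan is to produce a uniform termwise majorant for the family on any compact $K \subset U_r$ and then check summability of the majorant. First I would exploit openness of $U_r$ and compactness of $K$: since each linear functional $s \mapsto \Re(s_1+\cdots+s_i)$ is continuous and strictly exceeds $i$ on $U_r$, a standard compactness argument yields an $\epsilon > 0$ with
$$
\Re(s_1 + \cdots + s_i) \,\ge\, i + \epsilon
\quad \text{for every } s \in K \text{ and every } 1 \le i \le r.
$$

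The crucial step is an Abel-summation rearrangement that turns the hypothesis on the partial sums $t_i := s_1 + \cdots + s_i$ into a pointwise lower bound. A short telescoping check gives the identity
$$
n_1^{s_1} \cdots n_r^{s_r} \,=\, n_r^{t_r} \prod_{i=1}^{r-1} \left(\frac{n_i}{n_{i+1}}\right)^{t_i}.
$$
Taking moduli, $|n_i^{t_i}| = n_i^{\Re(t_i)}$; since $n_i > n_{i+1}$ forces $\log(n_i/n_{i+1}) > 0$ and $n_r \ge 1$ forces $\log n_r \ge 0$, the lower bounds $\Re(t_i) \ge i + \epsilon$ yield
$$
|n_1^{s_1} \cdots n_r^{s_r}| \,\ge\, n_r^{r+\epsilon} \prod_{i=1}^{r-1} \left(\frac{n_i}{n_{i+1}}\right)^{i+\epsilon}.
$$
A brief algebraic simplification collapses the right-hand side to $n_1^{1+\epsilon}\, n_2 n_3 \cdots n_r$, producing the uniform estimate
$$
\left\|\frac{1}{n_1^{s_1} \cdots n_r^{s_r}}\right\|_K \,\le\, \frac{1}{n_1^{1+\epsilon}\, n_2 \cdots n_r}.
$$

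Finally I would verify summability of this majorant over the index set $\{n_1 > \cdots > n_r > 0\}$. For fixed $n_1$, the inner sum over distinct $n_2, \ldots, n_r < n_1$ is bounded by $\frac{1}{(r-1)!}(H_{n_1 - 1})^{r-1}$, where $H_m$ denotes the $m$-th harmonic number. Hence the majorant is dominated by $\sum_{n_1 \ge 1} \frac{(\log n_1)^{r-1}}{n_1^{1+\epsilon}}$, which converges.

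The only genuinely delicate step is recognising the Abel-summation rewriting that converts the constraints $\Re(s_1 + \cdots + s_i) > i$ into the pointwise lower bound; once that identity is in hand, the nonnegativity of $\log(n_i/n_{i+1})$ and $\log n_r$ make the estimate and the subsequent convergence check essentially routine.
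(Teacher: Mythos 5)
Your proof is correct. Note that the paper itself gives no proof of this lemma — it is imported from \cite{MSV} — and your argument is precisely the standard one used there: extract a uniform $\epsilon>0$ by compactness, telescope $n_1^{s_1}\cdots n_r^{s_r}=n_r^{t_r}\prod_{i<r}(n_i/n_{i+1})^{t_i}$ to get the majorant $n_1^{-(1+\epsilon)}(n_2\cdots n_r)^{-1}$, and sum using the elementary symmetric function bound $e_{r-1}\le H_{n_1-1}^{r-1}/(r-1)!$. (Only a cosmetic slip: you write $|n_i^{t_i}|=n_i^{\Re(t_i)}$ where you mean $|(n_i/n_{i+1})^{t_i}|=(n_i/n_{i+1})^{\Re(t_i)}$, but the surrounding reasoning makes clear the ratios are what you are bounding.)
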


\begin{prop}\label{norm}
For an integer $r \ge 1$ and $\lambda_1, \ldots, \lambda_r,
\alpha_1, \ldots, \alpha_r  \in [0,1)$, the family of functions
$$
\left ( \frac{e(\lambda_1 n_1)\cdots e(\lambda_r n_r)}
{(n_1+\alpha_1)^{s_1} \cdots (n_r+\alpha_r)^{s_r}} \right )_{n_1>\cdots>n_r>0}
$$
converges normally on any compact subset of $U_r$.
\end{prop}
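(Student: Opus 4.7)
The plan is to reduce \propref{norm} directly to \lemref{mzf} by absorbing the oscillating numerators and the shifts $\alpha_i$ into a bounded constant on compact subsets. The key observations are that $|e(\lambda_i n_i)| = 1$ for all real $\lambda_i$ and $n_i$, and that $(n_i + \alpha_i)$ is a positive real, so $|(n_i+\alpha_i)^{s_i}| = (n_i+\alpha_i)^{\Re(s_i)}$. Consequently, the modulus of a typical term of our family is simply
\begin{equation*}
\frac{1}{(n_1+\alpha_1)^{\sigma_1} \cdots (n_r+\alpha_r)^{\sigma_r}},
\end{equation*}
where $\sigma_i := \Re(s_i)$.

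Next, given a compact set $K \subset U_r$, I would fix a constant $M > 0$ with $|\sigma_i| \le M$ for all $(s_1,\ldots,s_r) \in K$ and all $1 \le i \le r$. Since $\alpha_i \in [0,1)$ and $n_i \ge 1$, we have $1 \le \frac{n_i+\alpha_i}{n_i} < 2$, so
\begin{equation*}
\left(\frac{n_i}{n_i+\alpha_i}\right)^{\sigma_i} \in [2^{-M}, 2^{M}]
\end{equation*}
uniformly in $n_i$ and in $(s_1,\ldots,s_r) \in K$. Multiplying these $r$ ratios gives a uniform bound
\begin{equation*}
\frac{1}{(n_1+\alpha_1)^{\sigma_1} \cdots (n_r+\alpha_r)^{\sigma_r}} \le 2^{rM}\, \frac{1}{n_1^{\sigma_1} \cdots n_r^{\sigma_r}}
\end{equation*}
valid for every $(n_1,\ldots,n_r)$ with $n_1 > \cdots > n_r > 0$ and every point of $K$.

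Taking the supremum over $K$ on both sides, the desired normal summability reduces to the normal summability of $\bigl(n_1^{-s_1}\cdots n_r^{-s_r}\bigr)_{n_1>\cdots>n_r>0}$ on the compact set $K$ (note that $|n_i^{s_i}|=n_i^{\sigma_i}$ so the inequality above is really an inequality between moduli of the corresponding terms). The latter is exactly the content of \lemref{mzf}, so the proof concludes. There is no real obstacle here: this is a routine comparison argument, and the only thing to be careful about is extracting a uniform constant that does not depend on the indices $n_i$, which is where the bound $\alpha_i < 1$ (ensuring $n_i+\alpha_i < 2n_i$) plays its role.
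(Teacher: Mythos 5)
Your proof is correct and follows essentially the same route as the paper: reduce to \lemref{mzf} by a termwise comparison after noting $|e(\lambda_i n_i)|=1$. In fact your version with the uniform constant $2^{rM}$ is slightly more careful than the paper's one-line bound $\bigl|(n_1+\alpha_1)^{-s_1}\cdots(n_r+\alpha_r)^{-s_r}\bigr| \le \bigl|n_1^{-s_1}\cdots n_r^{-s_r}\bigr|$, which as literally written can fail termwise on $U_r$ when some $\Re(s_i)<0$ for $i\ge 2$ (allowed in $U_r$ since only the partial sums are constrained); your constant repairs this without affecting normal summability on compacta.
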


\begin{proof}
The proposition follows immediately from \lemref{mzf} as in $U_r$
$$
\left | \frac{e(\lambda_1 n_1)\cdots e(\lambda_r n_r)}
{(n_1+\alpha_1)^{s_1} \cdots (n_r+\alpha_r)^{s_r}} \right |
\le \left | \frac{1}{n_1^{s_1} \cdots n_r^{s_r}} \right |.
$$
\end{proof}

We further need the following propositions.

\begin{prop}\label{lerch-1}
Let $m \ge 0, r \ge 2$ be natural numbers and
$\lambda_1, \ldots, \lambda_r,
\alpha_1, \ldots, \alpha_r  \in [0,1)$. 
Then the family of functions
$$
\left ( (s_1)_k \frac{e(\lambda_1n_1) e(\lambda_2n_2)\cdots e(\lambda_r n_r)}
{(n_1+ \alpha_1)^{s_1+ k + 1} (n_2 + \alpha_2)^{s_2}
\cdots (n_r + \alpha_r)^{s_r}}\right )_{n_1> \cdots > n_r>0, \atop  k\ge m -1}
$$
is normally summable on compact subsets of $U_r(m)$. 
\end{prop}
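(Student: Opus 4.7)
The plan is to reduce the normal summability of this doubly-indexed family to Proposition \ref{norm} in two stages: first, carry out the $k$-summation for each fixed tuple $(n_1,\ldots,n_r)$ to produce a factor of $(n_1+\alpha_1)^{-m}$; second, absorb this factor via the shift $s_1 \mapsto s_1 + m$, which sends $U_r(m)$ onto $U_r$.

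To execute the first stage, fix a compact $K \subset U_r(m)$ and set $M := \sup_K |s_1|$. Since $|e(\lambda_i n_i)| = 1$ and $|(n+\alpha)^{-s}| = (n+\alpha)^{-\Re(s)}$, the elementary inequality $\sup(fg) \le \sup(f)\sup(g)$ for non-negative $f,g$ bounds the sup of the general term by
\begin{equation*}
\bigl[\sup_K |(s_1)_k|\,(n_1+\alpha_1)^{-k-1}\bigr] \cdot \sup_K \Bigl[(n_1+\alpha_1)^{-\Re(s_1)} \prod_{i=2}^r (n_i+\alpha_i)^{-\Re(s_i)}\Bigr].
\end{equation*}
From $|s_1 + j| \le M + j$ we obtain $|(s_1)_k| \le (M)_k$ uniformly on $K$. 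Because $r \ge 2$ and the $n_i$ are strictly decreasing positive integers, we have $n_1 \ge 2$, so $y := (n_1+\alpha_1)^{-1} \le 1/2$. Factoring out $y^m$,
\begin{equation*}
\sum_{k \ge m-1} (M)_k\, y^{k+1} \;=\; y^m \sum_{j \ge 0} (M)_{j+m-1}\, y^j \;\le\; C(K,m) \cdot (n_1+\alpha_1)^{-m},
\end{equation*}
with $C(K,m) := \sum_{j \ge 0}(M)_{j+m-1}\, 2^{-j}$ finite since $(M)_{j+m-1}$ grows only polynomially in $j$.

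For the second stage, multiplying by the second factor and summing over $(n_1, \ldots, n_r)$ yields the upper bound
\begin{equation*}
C(K,m) \sum_{n_1 > \cdots > n_r > 0} \sup_K \Bigl[(n_1+\alpha_1)^{-\Re(s_1)-m} \prod_{i=2}^r (n_i+\alpha_i)^{-\Re(s_i)}\Bigr].
\end{equation*}
I would then apply Proposition \ref{norm} to the compact set $K' := \{(s_1+m, s_2, \ldots, s_r) : (s_1, \ldots, s_r) \in K\}$; this set lies in $U_r$ because the defining inequalities $\Re(s_1 + \cdots + s_i) > i - m$ of $U_r(m)$ become $\Re((s_1+m) + s_2 + \cdots + s_i) > i$ after the shift, so the displayed sum is finite. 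The main obstacle is arranging the $k$-sum so that exactly the factor $(n_1+\alpha_1)^{-m}$ emerges---no more, no less---since it is precisely this shift by $m$ that cancels the relaxation from $U_r$ to $U_r(m)$; with this calibration in hand the rest is routine.
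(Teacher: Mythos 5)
Your argument is correct and follows essentially the same route as the paper's proof: bound $|(s_1)_k|$ by its supremum over $K$, use $n_1\ge 2$ (available since $r\ge 2$) to extract a convergent geometric-type factor from the $k$-sum while retaining exactly $(n_1+\alpha_1)^{-m}$, and then absorb that factor by the shift $s_1\mapsto s_1+m$, which maps $U_r(m)$ into $U_r$ where depth-$r$ normal convergence is already known. The only cosmetic difference is that you sum over $k$ first and invoke Proposition~\ref{norm} on the shifted compact set, whereas the paper bounds each term by a product of a $k$-dependent constant with $\|n_1^{-s_1-m}n_2^{-s_2}\cdots n_r^{-s_r}\|_K$ and cites Lemma~\ref{mzf}.
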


\begin{proof}
Let $K$ be a compact subset of $U_r(m)$ and
$ S:=\sup_{(s_1,\ldots,s_r) \in K} |s_1|$. 
Since $r \ge 2$, one has $n_1 \ge 2$ and hence
for $k \ge m -1$ and $(s_1, \ldots, s_r) \in U_r(m)$, 
we have
$$
\left\|  (s_1)_k \frac{e(\lambda_1n_1) \cdots e(\lambda_r n_r)}
{(n_1+ \alpha_1)^{s_1+ k + 1}  (n_2 + \alpha_2)^{s_2}
\cdots (n_r + \alpha_r)^{s_r}} \right \|_K
\le \frac{(S)_k}{2^{k - m + 1}}
\left\| \frac{1}
{n_1^{s_1+m} n_2^{s_2} \cdots n_r^{s_r}} \right \|_K.
$$
Note that $(s_1, \ldots, s_r) \in U_r(m)$ if and only if
$(s_1+ m, s_2, \ldots, s_r) \in U_r$.
Now the proof of  \propref{lerch-1} follows 
from \lemref{mzf} and
the fact that the series
$$
\sum_{k \ge m -1}  \frac{(S)_k}{2^{k- m + 1}}  
$$ 
converges.
\end{proof}

\begin{prop}\label{lerch-2}
Let $ m \ge 0, r \ge 2$ be natural numbers and
$\lambda_1, \ldots, \lambda_r,
\alpha_1, \ldots, \alpha_r \in [0,1)$.
Then the family of functions
$$
\left((s_1)_k (\alpha_2 - \alpha_1)^{k+1}
\frac{e(\mu_2 n_2) e(\lambda_3 n_3) \cdots e(\lambda_r n_r)}
{(n_2+\alpha_2)^{s_1 + s_2 + k + 1} ~(n_3+\alpha_3)^{s_3}
\cdots (n_r + \alpha_r)^{s_r}}
\right)_{n_2 > \cdots > n_r >0, \atop k \ge m - 1}
$$
is normally summable on any compact subset of $U_r(m + 1)$
and hence on $U_r$. 
\end{prop}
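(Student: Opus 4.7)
The plan is to mimic the proof of \propref{lerch-1}, splitting the sup-norm of each term into a factor that decays in $k$ and a factor indexed by $(n_2,\ldots,n_r)$ that is covered by \propref{norm} at depth $r-1$. The crucial new ingredient is that the required decay in $k$ cannot come from a denominator $n_1\ge 2$ (since here the smallest available index is $n_2$, which may equal $1$ when $r=2$), but instead comes from the factor $(\alpha_2-\alpha_1)^{k+1}$: because $\alpha_1,\alpha_2\in [0,1)$, the quantity $\theta:=|\alpha_2-\alpha_1|$ lies strictly in $[0,1)$.

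Fix a compact $K\subset U_r(m+1)$ and set $S:=\sup_K|s_1|<\infty$. For $k\ge m-1$ one has $k+1\ge m$, and since $n_2+\alpha_2\ge 1$ this gives the pointwise inequality
$$
\bigl|(n_2+\alpha_2)^{-(s_1+s_2+k+1)}\bigr|\;=\;(n_2+\alpha_2)^{-\Re(s_1+s_2)-k-1}\;\le\;(n_2+\alpha_2)^{-\Re(s_1+s_2+m)}.
$$
Combining this with $|e(\mu_2 n_2)\cdots e(\lambda_r n_r)|=1$ and $|(s_1)_k(\alpha_2-\alpha_1)^{k+1}|\le (S)_k\theta^{k+1}$, the sup-norm on $K$ of the $(k,\vec n)$-th term is at most $(S)_k\,\theta^{k+1}\,M_{\vec n}$, where $M_{\vec n}:=\bigl\|1/\bigl((n_2+\alpha_2)^{s_1+s_2+m}(n_3+\alpha_3)^{s_3}\cdots(n_r+\alpha_r)^{s_r}\bigr)\bigr\|_K$.

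To handle $\sum_{\vec n} M_{\vec n}$, consider the affine map $\Phi(s_1,\ldots,s_r):=(s_1+s_2+m,s_3,\ldots,s_r)$. Writing $\Re(t_1+\cdots+t_j)=\Re(s_1+\cdots+s_{j+1})+m$ and using $\Re(s_1+\cdots+s_{j+1})>(j+1)-(m+1)=j-m$ in $U_r(m+1)$, one verifies $\Phi(U_r(m+1))\subset U_{r-1}$. Thus $K':=\Phi(K)$ is compact in $U_{r-1}$, and since $M_{\vec n}$ depends on $(s_1,\ldots,s_r)$ only through $\Phi(s_1,\ldots,s_r)$, \propref{norm} applied at depth $r-1$ (with $\lambda_i=0$ and shifts $\alpha_2,\ldots,\alpha_r$) yields $C_K:=\sum_{n_2>\cdots>n_r>0} M_{\vec n}<\infty$. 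Since $\theta<1$ and $(S)_k$ grows only polynomially in $k$, $\sum_{k\ge m-1}(S)_k\theta^{k+1}$ also converges, and the double sum is bounded by the product of these two finite quantities, proving normal summability on $K$. The statement on $U_r$ follows from $U_r=U_r(0)\subset U_r(m+1)$. The only delicate step is the exponent bookkeeping---verifying that $k\ge m-1$ is exactly the threshold for the pointwise bound on $(n_2+\alpha_2)^{-(s_1+s_2+k+1)}$, and that the additive shift by $m$ is exactly what places $\Phi(K)$ inside $U_{r-1}$.
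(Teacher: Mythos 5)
Your proof is correct and follows essentially the same route as the paper: bound the sup-norm of each term by $(S)_k\,|\alpha_2-\alpha_1|^{k+1}$ times a depth-$(r-1)$ term, observe that the shift $(s_1,\ldots,s_r)\mapsto(s_1+s_2+m,s_3,\ldots,s_r)$ maps $U_r(m+1)$ into $U_{r-1}$, and sum the two factors separately (the paper invokes \lemref{mzf} at depth $r-1$ where you invoke \propref{norm}, an immaterial difference). Your explicit justification that $\sum_k (S)_k\theta^{k+1}$ converges because $(S)_k$ grows only polynomially is a detail the paper leaves implicit.
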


\begin{proof}
As before, let $K$ be a compact subset of $U_r(m +1)$ and
$$ 
S:=\sup_{(s_1,\ldots,s_r) \in K} |s_1|.
$$ 
Then for $k \ge m-1, r \ge 2$ and $(s_1, \ldots, s_r) \in U_r(m)$, 
one has
\begin{align*}
&\left \| (s_1)_k 
\frac{(\alpha_2 - \alpha_1)^{k+1} e(\mu_2 n_2) 
e(\lambda_3 n_3) \cdots e(\lambda_r n_r)}
{(n_2+\alpha_2)^{s_1 + s_2 + k +1} (n_3+\alpha_3)^{s_3}
\cdots (n_r + \alpha_r)^{s_r}} \right \|\\
&\le  {(S)_k}{ (\alpha_2 - \alpha_1)^{k+1}}
\left\|  \frac{1}{n_2^{s_1 + s_2 + m } n_3^{s_3} \cdots n_r^{s_r}} \right \|.
\end{align*}
Note that
\begin{eqnarray*}
(s_1, \ldots,s_r) \in U_r(m + 1) 
&\implies&  
(s_1 + s_2, s_3, \ldots, s_r) \in U_{r-1}(m) \\ 
&\implies&
(s_1+ s_2 + m, s_3, \ldots, s_r) \in U_{r-1}. 
\end{eqnarray*} 
The proof now follows from \lemref{mzf} (for $(r-1)$)
and the fact that
$$
\sum_{k \ge m -1}  {(S)_k}{ (\alpha_2 - \alpha_1)^{k+1}} 
$$ 
converges, as $|\alpha_2 - \alpha_1| <1$.
\end{proof}

\begin{prop}\label{lerch-3}
Let $r \ge 2$ be an integer and
$\lambda_1, \ldots, \lambda_r, \alpha_1, \ldots, \alpha_r \in [0,1)$.
The family of functions
$$
\left(\frac{e(\lambda_1 n_1) \cdots e(\lambda_r n_r)}
{(n_1+\alpha_1-1)^{s_1}(n_2+\alpha_2)^{s_2} \cdots (n_r + \alpha_r)^{s_r}}
\right)_{n_1 > \cdots > n_r >0}
$$
is normally summable on any compact subset of $U_r$.
\end{prop}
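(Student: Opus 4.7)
My plan is to follow very closely the proof of Proposition \ref{norm}. The only new ingredient in Proposition \ref{lerch-3} is that the first factor in the denominator is $(n_1+\alpha_1-1)^{s_1}$ instead of $(n_1+\alpha_1)^{s_1}$. Since $\alpha_1 \in [0,1)$, the base $n_1+\alpha_1-1$ is strictly positive only when $n_1 \ge 2$; but the summation range $n_1 > n_2 > \cdots > n_r > 0$ with $r\ge 2$ forces $n_1 \ge r \ge 2$, so this is automatic, and the arbitrary complex power $(n_1+\alpha_1-1)^{s_1}$ is unambiguously defined throughout the sum.

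The substance of the argument is to dominate the absolute value of the general term by a uniform constant times the corresponding multiple zeta term, and then appeal to Lemma \ref{mzf}. Since $|e(\lambda_j n_j)|=1$ for all $j$, the numerators drop out. For $i\ge 2$, the elementary estimate $n_i \le n_i+\alpha_i < 2n_i$ gives
\[
\frac{1}{(n_i+\alpha_i)^{\Re(s_i)}} \le \frac{2^{|\Re(s_i)|}}{n_i^{\Re(s_i)}},
\]
and for $i=1$ the chain $n_1/2 \le n_1-1 \le n_1+\alpha_1-1 \le n_1$ (valid since $n_1\ge 2$ and $\alpha_1\in[0,1)$) yields the analogous bound with the same power of $2$. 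Multiplying these estimates across $i=1,\ldots,r$ and taking the supremum over a compact $K\subset U_r$ (on which each $\Re(s_i)$ is uniformly bounded), one obtains a uniform pointwise domination of the given family by a constant $C(K)$ times the multiple zeta family $|n_1^{-s_1}\cdots n_r^{-s_r}|$.

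The conclusion is then immediate from Lemma \ref{mzf}, which supplies the normal summability of $(n_1^{-s_1}\cdots n_r^{-s_r})_{n_1>\cdots>n_r>0}$ on compact subsets of $U_r$. I do not anticipate any real obstacle; the whole argument is a routine modification of Proposition \ref{norm}, and the only essential use of the hypothesis $r\ge 2$ is to guarantee that the shifted base $n_1+\alpha_1-1$ stays bounded away from zero throughout the summation range.
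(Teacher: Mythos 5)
Your proof is correct, but it handles the one genuinely new feature of this proposition --- the shifted factor $(n_1+\alpha_1-1)^{s_1}$ --- differently from the paper. The paper first dominates the general term by $|(n_1-1)^{-s_1}n_2^{-s_2}\cdots n_r^{-s_r}|$ and then, because the family $((n_1-1)^{-s_1}n_2^{-s_2}\cdots)_{n_1>\cdots>n_r>0}$ is \emph{not} a re-indexing of the multiple zeta family (when $n_1=n_2+1$ the first factor degenerates to $n_2^{-s_1}$), it carries out the $n_1$-summation explicitly, comparing $\sum_{n_1\ge n_2+1}(n_1-1)^{-\Re(s_1)}$ with an integral to produce $n_2^{-\Re(s_1)}+\frac{1}{\Re(s_1)-1}n_2^{1-\Re(s_1)}$, which in effect reduces matters to a depth-$(r-1)$ instance of \lemref{mzf} with first variable $s_1+s_2$. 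You instead absorb the shift into a multiplicative constant via $n_1/2\le n_1-1\le n_1+\alpha_1-1\le n_1$ (valid since $n_1\ge r\ge 2$), giving $|(n_1+\alpha_1-1)^{-s_1}|\le 2^{\Re(s_1)}|n_1^{-s_1}|$ because $\Re(s_1)>1>0$ on $U_r$; this reduces the claim directly to \lemref{mzf} at depth $r$ with a single constant $C(K)$. Your route is more elementary and avoids the integral-comparison step entirely; it is also slightly more careful than the paper in that your factors $2^{|\Re(s_i)|}$ for $i\ge 2$ explicitly accommodate the fact that $U_r$ permits $\Re(s_i)<0$ for $i\ge 2$, a point the analogous inequality in \propref{norm} passes over silently. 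Both arguments are valid; yours trades the paper's depth-reduction for a cleaner uniform domination.
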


\begin{proof}
Note that
$$
\left | \frac{e(\lambda_1 n_1) \cdots e(\lambda_r n_r)}
{(n_1+\alpha_1-1)^{s_1} (n_2+\alpha_2)^{s_2} \cdots (n_r + \alpha_r)^{s_r}} \right |
\le \left | \frac{1}{(n_1-1)^{s_1} n_2^{s_2} \cdots n_r^{s_r}} \right |.
$$
Also note that,
\begin{align*}
\left | \sum_{n_1 \ge n_2 +1} (n_1-1)^{-s_1} \right|
& \le n_2^{-\Re(s_1)} + \sum_{n_1 \ge n_2 +1} n_1^{-\Re(s_1)}\\
& \le n_2^{-\Re(s_1)} + \int_{n_2}^\infty x^{-\Re(s_1)}~dx\\
& = n_2^{-\Re(s_1)} + \frac{1}{\Re(s_1)-1} n_2^{1-\Re(s_1)}.
\end{align*}
The proof follows from \lemref{mzf}.
\end{proof}

\subsection{Proof of \thmref{gen-rama}}
We begin with the following identity which is valid for 
 any integer $n \ge 2$, any real number
$\alpha \ge 0$ and any complex number $s$:
\begin{equation}\label{trick}
(n+\alpha-1)^{-s}=\sum_{k \ge -1} (s)_k (n+\alpha)^{-s-k-1}.
\end{equation}
This identity is easily obtained by writing the left hand side as
$(n+\alpha)^{-s}(1-\frac{1}{n+\alpha})^{-s}$ and expanding
$(1-\frac{1}{n+\alpha})^{-s}$ as a Taylor series in $\frac{1}{n+\alpha}$.

In \eqref{trick} we replace $n,\alpha,s$ by $n_1,\alpha_1,s_1$
respectively and then multiply both sides by
$$
\frac{e(\lambda_1 n_1) \cdots e(\lambda_r n_r)}
{(n_2+\alpha_2)^{s_2} \cdots (n_r + \alpha_r)^{s_r}},
$$
and sum for $n_1>\cdots>n_r>0$.
Using \propref{lerch-3}, we get that
\begin{equation}\label{LHS-1}
\begin{split}
& \sum_{n_1 > \cdots > n_r >0} \frac{e(\lambda_1 n_1) \cdots e(\lambda_r n_r)}
{(n_1+\alpha_1-1)^{s_1} (n_2+\alpha_2)^{s_2} \cdots (n_r + \alpha_r)^{s_r}}\\
& = e(\lambda_1) \sum_{n_1 > \cdots > n_r >0}
\frac{e(\lambda_1 n_1) \cdots e(\lambda_r n_r)}
{(n_1+\alpha_1)^{s_1} (n_2+\alpha_2)^{s_2} \cdots (n_r + \alpha_r)^{s_r}}\\
& + e(\lambda_1) \sum_{n_2 > \cdots > n_r >0}
\frac{e(\mu_2 n_2) e(\lambda_3 n_3) \cdots e(\lambda_r n_r)}
{(n_2+\alpha_1)^{s_1} (n_2+\alpha_2)^{s_2} (n_3+\alpha_3)^{s_3}
\cdots (n_r + \alpha_r)^{s_r}}.
\end{split}
\end{equation}

Now,
$$
(n_2+\alpha_1)^{-s_1}
= \sum_{k \ge -1} (s_1)_k
(\alpha_2-\alpha_1)^{k+1} (n_2+\alpha_2)^{-s_1-k-1}.
$$
Hence using \propref{lerch-2} (for $m=0$), we obtain that
\begin{equation}\label{LHS-2}
\begin{split}
& \sum_{n_1 > \cdots > n_r >0}
\frac{e(\lambda_1 n_1) \cdots e(\lambda_r n_r)}
{(n_1+\alpha_1-1)^{s_1} (n_2+\alpha_2)^{s_2}
\cdots (n_r + \alpha_r)^{s_r}}\\
& = e(\lambda_1) L_r(\lambda_1,\ldots,\lambda_r; 
\alpha_1, \ldots, \alpha_r; s_1,\ldots,s_r)\\
& + e(\lambda_1) \sum_{k \ge -1} (s_1)_k (\alpha_2-\alpha_1)^{k+1}
L_{r-1}(\mu_2,\lambda_3,\ldots,\lambda_r; 
\alpha_2, \ldots, \alpha_r;
s_1+s_2+k+1,s_3,\ldots,s_r).
\end{split}
\end{equation}
On the other hand, using \eqref{trick} and
\propref{lerch-1} (for $m=0$), we get that
\begin{equation}\label{RHS}
\begin{split}
& \sum_{n_1 > \cdots > n_r >0}
\frac{e(\lambda_1 n_1) \cdots e(\lambda_r n_r)}
{(n_1+\alpha_1-1)^{s_1} (n_2+\alpha_2)^{s_2}
\cdots (n_r + \alpha_r)^{s_r}}\\
& = \sum _{k\ge -1} (s_1)_k L_r(\lambda_1,\ldots,\lambda_r;
\alpha_1, \ldots, \alpha_r; s_1+k+1,s_2,\ldots,s_r)
\end{split}
\end{equation}
Now equating the right hand sides of \eqref{LHS-2}
and \eqref{RHS} we deduce \eqref{trans}. This together with
\propref{lerch-1}, \propref{lerch-2} completes the proof.
\qed

\section{Proofs of \thmref{easy} and \thmref{multiple-Lerch}}

In this section, we use the generalised Ramanujan's
identities \eqref{trans} and \eqref{trans2}
to prove \thmref{easy} and \thmref{multiple-Lerch}.
We will prove these theorems by induction on depth $r$.
We assume that the multiple Lerch zeta function of depth
$(r-1)$ has already been extended to $\C^r$
and then by induction on $m \ge 1$ we extend the
multiple Lerch zeta function of depth $r$ to each of $U_r(m)$.
Since, $(U_r(m))_{m \ge 1}$ is an open covering
of $\C^r$ we will get our desired result.

\subsection{Proof of \thmref{easy}}
When $r=1$, then \thmref{easy} is true by \rmkref{easy-kn}.
Now let $r \ge 2$ and $\mu_i \not\in \Z$ for $1 \le i \le r$.
For any $m \ge 1$, we rewrite \eqref{trans} as
\begin{equation*}
\begin{split}
&e(\lambda_1) \sum_{k \ge m-2} (s_1)_k (\alpha_2-\alpha_1)^{k+1}
L_{r-1}(\mu_2,\lambda_3,\ldots,\lambda_r; 
\alpha_2, \ldots, \alpha_r;s_1+s_2+k+1,s_3,\ldots,s_r)\\
&+e(\lambda_1) \sum_{-1 \le k \le m-3} (s_1)_k (\alpha_2-\alpha_1)^{k+1}
L_{r-1}(\mu_2,\lambda_3,\ldots,\lambda_r; 
\alpha_2, \ldots, \alpha_r;s_1+s_2+k+1,s_3,\ldots,s_r)\\
&= (1- e(\lambda_1)) L_r(\lambda_1,\ldots,\lambda_r; 
\alpha_1, \ldots, \alpha_r;s_1,\ldots,s_r)\\
&+ \sum _{k\ge m-1} (s_1)_k L_r(\lambda_1,\ldots,\lambda_r;
 \alpha_1, \ldots, \alpha_r;s_1+k+1,s_2,\ldots,s_r)\\
&+ \sum _{0 \le k \le m-2} (s_1)_k L_r(\lambda_1,\ldots,\lambda_r;
 \alpha_1, \ldots, \alpha_r;s_1+k+1,s_2,\ldots,s_r).
\end{split}
\end{equation*}
Now by virtue of \propref{lerch-1}, \propref{lerch-2}
and the induction hypothesis for multiple Lerch zeta functions
of depth $(r-1)$, we see that all the $k$-sums in \eqref{trans}
are analytic in $U_r(1)$. Therefore \eqref{trans}
defines an analytic continuation of
$$
L_r(\lambda_1,\ldots,\lambda_r; 
\alpha_1, \ldots, \alpha_r;s_1,\ldots,s_r)
$$
as $e(\lambda_1)\neq 1$. Now suppose that
we have an analytic continuation of
$$
L_r(\lambda_1,\ldots,\lambda_r; 
\alpha_1, \ldots, \alpha_r;s_1,\ldots,s_r)
$$
to $U_r(m-1)$ which satisfies \eqref{trans} in $U_r(m-1)$.
Thus we get that the sum
$$
\sum _{0 \le k \le m-2} (s_1)_k L_r(\lambda_1,\ldots,\lambda_r;
\alpha_1, \ldots, \alpha_r;s_1+k+1,s_2,\ldots,s_r)
$$
is analytic in $U_r(m)$. Again we appeal to
\propref{lerch-1}, \propref{lerch-2}
and the induction hypothesis for multiple Lerch
zeta functions of depth $(r-1)$ to deduce that
all the $k$-sums in \eqref{trans} are analytic in $U_r(m)$.
Hence we obtain an analytic continuation of
$$
L_r(\lambda_1,\ldots,\lambda_r; 
\alpha_1, \ldots, \alpha_r;s_1,\ldots,s_r)
$$
to $U_r(m)$. Since, $(U_r(m))_{m \ge 1}$ is an open covering
of $\C^r$, this completes the proof. \qed

\subsection{Proof of \thmref{multiple-Lerch}.}

When $r=1$, then \thmref{multiple-Lerch} 
follows from \rmkref{easy-kn} if $\lambda_1 \not\in \Z$
and from \rmkref{multiple-Lerch-kn} if $\lambda_1 \in \Z$.
Now suppose $r \ge 2$ and \thmref{multiple-Lerch}
is true for multiple Lerch zeta function of depth $(r-1)$.

\subsection{Case 1 : $i_1 =1$}
In this case we have $\lambda_1 =0$ and hence
use \eqref{trans2}. We recall,
\begin{equation}\tag{\ref{trans2}}
\begin{split}
&\sum_{k \ge -1} (s_1-1)_k (\alpha_2-\alpha_1)^{k+1}
L_{r-1}(\lambda_2,\lambda_3,\ldots,\lambda_r; \alpha_2, \ldots, \alpha_r;
s_1+s_2+k,s_3,\ldots,s_r)\\
&= \sum _{k\ge 0} (s_1-1)_k L_r(0,\lambda_2,\ldots,\lambda_r;
\alpha_1, \ldots, \alpha_r; s_1+k,s_2,\ldots,s_r).
\end{split}
\end{equation}
To prove this case, we establish the meromorphic continuation of
$$
(s_1-1) L_r(0,\lambda_2,\ldots,\lambda_r;
\alpha_1, \ldots, \alpha_r; s_1,\ldots,s_r)
$$
to $\C^r$ using \eqref{trans2} and determine all its
possible singularities.

For any $m \ge 1$, we know by \propref{lerch-1}
and \propref{lerch-2} that the family of functions
$$
\left( (s_1-1)_k \frac{e(\lambda_2 n_2)\cdots e(\lambda_r n_r)}
{(n_1+\alpha_1)^{s_1+k}(n_2+\alpha_2)^{s_2} \cdots (n_r + \alpha_r)^{s_r}}
\right)_{n_1 > \cdots > n_r >0, \atop k \ge m} \\
$$
and
$$
\left( (s_1-1)_k (\alpha_2-\alpha_1)^{k+1}
\frac{e(\lambda_2 n_2)e(\lambda_3n_3)\cdots e(\lambda_r n_r)}
{(n_2+\alpha_2)^{s_1+s_2+k} \ (n_3+\alpha_3)^{s_3} \cdots
(n_r + \alpha_r)^{s_r}} \right)_{n_2 > \cdots > n_r >0, 
\atop k \ge m - 1}
$$
are normally summable on every compact subset of $U_r(m)$. 

Now for any $m \ge 1$, we rewrite \eqref{trans} as
\begin{equation*}
\begin{split}
&\sum_{k \ge m-1} (s_1-1)_k (\alpha_2-\alpha_1)^{k+1}
L_{r-1}(\lambda_2,\lambda_3,\ldots,\lambda_r; \alpha_2, \ldots, \alpha_r;
s_1+s_2+k,s_3,\ldots,s_r)\\
&+\sum_{-1 \le k \le m-2} (s_1-1)_k (\alpha_2-\alpha_1)^{k+1}
L_{r-1}(\lambda_2,\lambda_3,\ldots,\lambda_r; \alpha_2, \ldots, \alpha_r;
s_1+s_2+k,s_3,\ldots,s_r)\\
&= (s_1-1) L_r(0,\lambda_2,\ldots,\lambda_r;
\alpha_1, \ldots, \alpha_r; s_1,\ldots,s_r)\\
&+ \sum _{k\ge m} (s_1-1)_k L_r(0,\lambda_2,\ldots,\lambda_r;
\alpha_1, \ldots, \alpha_r; s_1+k,s_2,\ldots,s_r)\\
&+ \sum _{1\le k\le m-1} (s_1-1)_k L_r(0,\lambda_2,\ldots,\lambda_r;
\alpha_1, \ldots, \alpha_r; s_1+k,s_2,\ldots,s_r).
\end{split}
\end{equation*}
Using the above observation, we obtain that
both the infinite $k$-sums in the above equation
are analytic in $U_r(m)$. From the induction hypothesis
we deduce that the sum
$$
\sum_{-1 \le k \le m-2} (s_1-1)_k (\alpha_2-\alpha_1)^{k+1}
L_{r-1}(\lambda_2,\lambda_3,\ldots,\lambda_r; \alpha_2, \ldots, \alpha_r;
s_1+s_2+k,s_3,\ldots,s_r)
$$
has a meromorphic continuation to $\C^r$.
Now if we have that the function
$$
(s_1-1) L_r(0,\lambda_2,\ldots,\lambda_r;
\alpha_1, \ldots, \alpha_r; s_1,\ldots,s_r)
$$
has a meromorphic continuation to $U_r(m-1)$
for each $m \ge 1$, then we can deduce that the sum
$$
\sum _{1\le k\le m-1} (s_1-1)_k L_r(0,\lambda_2,\ldots,\lambda_r;
\alpha_1, \ldots, \alpha_r; s_1+k,s_2,\ldots,s_r)
$$
has a meromorphic continuation to $U_r(m)$ for each $m \ge 1$.
Therefore we obtain a meromorphic continuation of
$$
(s_1-1) L_r(0,\lambda_2,\ldots,\lambda_r;
\alpha_1, \ldots, \alpha_r; s_1,\ldots,s_r)
$$
to $U_r(m)$ by means of \eqref{trans2}.
Since, $(U_r(m))_{m \ge 1}$ is an open covering
of $\C^r$, we obtain a meromorphic continuation of
$$
(s_1-1) L_r(0,\lambda_2,\ldots,\lambda_r;
\alpha_1, \ldots, \alpha_r; s_1,\ldots,s_r)
$$
to $\C^r$. 

Now for the set of singularities, we see from \eqref{trans2}
that the singularities of
$$
(s_1-1) L_r(0,\lambda_2,\ldots,\lambda_r;
\alpha_1, \ldots, \alpha_r; s_1,\ldots,s_r)
$$
can only come from that of
$$
L_{r-1}(\lambda_2,\ldots,\lambda_r; \alpha_2, \ldots, \alpha_r;
s_1+s_2+k,s_3,\ldots,s_r)
$$
for all $k \ge -1$, and these singularities are known from the
induction hypothesis. Finally we deduce that
$$
(s_1-1) L_r(0,\lambda_2,\ldots,\lambda_r;
\alpha_1, \ldots, \alpha_r; s_1,\ldots,s_r)
$$
has only possible polar singularities along the hyperplanes
$$
H_{i_j, k} ~\text{ for  }~
2 \le j \le m \ \text{ with }~(i_j -k) \in \Z_{\le j}.
$$
This completes the proof of this case.

\subsection{Case 2 : $i_1 \neq 1$}
Since in this case the applicable generalised Ramanujan's identity
is \eqref{trans}, proof of this case follows exactly the line of
argument for the proof of \thmref{easy}. The only difference would be that
on each of $U_r(m)$ the depth $r$ multiple Lerch zeta function can only be
extended as a meromorphic function. This is because of 
the induction hypothesis which implies that the depth $(r-1)$
multiple Lerch zeta functions 
$$
L_{r-1}(\mu_2,\lambda_3,\ldots,\lambda_r; \alpha_2, \ldots, \alpha_r;
s_1+s_2+k+1,s_3,\ldots,s_r)
$$
for $k \ge -1$ can only be extended as meromorphic functions to $\C^r$.

Now for the set of singularities, we see from \eqref{trans}
that the singularities of
$$
L_r(\lambda_1,\ldots,\lambda_r;
\alpha_1, \ldots, \alpha_r; s_1,\ldots,s_r)
$$
can only come from that of
$$
L_{r-1}(\mu_2,\lambda_3,\ldots,\lambda_r; \alpha_2, \ldots, \alpha_r;
s_1+s_2+k+1,s_3,\ldots,s_r)
$$
for $k \ge -1$. These singularities are known from the
induction hypothesis and hence we deduce that
$$
L_r(\lambda_1,\ldots,\lambda_r;
\alpha_1, \ldots, \alpha_r; s_1,\ldots,s_r)
$$
has only possible polar singularities along the hyperplanes
$$
H_{i_j, k} ~\text{ for  }~
1 \le j \le m \ \text{ with }~(i_j -k) \in \Z_{\le j}.
$$
\qed

\section{Explicit computations of residues
for multiple Hurwitz zeta functions}

To get hold of the exact set of singularities we need
to calculate the residues of the multiple Lerch zeta
functions along its possible polar hyperplanes. For
a hyperplane $H_{i,k}$, by residue of
$$
L_r ( \lambda_1, \ldots, \lambda_r;
 \alpha_1, \ldots, \alpha_r ;  s_1, \ldots, s_r )
$$
along $H_{i,k}$ we mean the restriction to $H_{i,k}$
of the meromorphic function
$$
(s_1+\cdots+s_i-i+k) L_r ( \lambda_1, \ldots, \lambda_r;
 \alpha_1, \ldots, \alpha_r ;  s_1, \ldots, s_r ).
$$

It turns out that to study non-vanishing
of these residues one needs information about
zero sets of a family of polynomials with
two variables (see \rmkref{gen-Ber} below).
But for multiple Hurwitz zeta functions
we only have to deal with the family of
Bernoulli polynomials. As the zero set
of Bernoulli polynomials are well-studied
we just have enough information to
determine the exact set of singularities
of the multiple Hurwitz zeta functions.

In what follows, we obtain a computable expression
for residues of the multiple Hurwitz zeta functions.
Note that the applicable generalised Ramanujan's
identity in this case is \eqref{trans2}. Following
this process one can also obtain similar
expression for residues of the multiple Lerch zeta functions.
For brevity, we do not include this here.
We begin this section with some elementary
remarks about infinite triangular matrices.

Let $R$ be a commutative ring with unity. By ${\bf T}(R)$ we denote
the set of upper triangular matrices of type $\N \times \N$
with coefficients in $R$. Adding or multiplying such matrices involves
only finite sums, hence ${\bf T}(R)$ is a ring, and even an $R$-algebra.
The group of invertible elements of ${\bf T}(R)$ are the matrices
whose diagonal elements are invertible. Now let ${\bf P}$ be a matrix in
${\bf T}(R)$ with all diagonal elements equal to $0$, and
$f = \sum_{n \ge 0} a_n x^n \in R[[x]]$ be a formal power series,
then the series $\sum_{n \ge 0} a_n {\bf P}^n$ converges in ${\bf T}(R)$
and we denote its sum by $f({\bf P})$. For our purpose, we take
$R$ to be the field of rational fractions $\C(t)$ in one indeterminate
$t$ over $\C$.

Recall that from \thmref{gen-rama}, we get that
the multiple Hurwitz zeta function of
depth $r$ satisfy the following generalised
Ramanujan's identity:
\begin{equation}\label{tf-mhzf-2}
\begin{split}
& \sum_{k \ge -1} (s_1-1)_k \ (\alpha_2-\alpha_1)^{k+1} \
\z_{r-1}(s_1+s_2+k,s_3,\ldots,s_r;\alpha_2,\alpha_3,\ldots,\alpha_r)\\
&=\sum_{k\ge 0} (s_1-1)_k \
\z_r(s_1+k,s_2,\ldots,s_r;\alpha_1,\alpha_2,\ldots,\alpha_r),
\end{split}
\end{equation}
where both the above series of meromorphic functions
converge normally on all compact subsets of $\C^r$.
Formula \eqref{tf-mhzf-2}
together with the set of relations obtained by applying successively
the change of variable $s_1 \mapsto s_1+n$ 
for $n \ge 1$ to \eqref{tf-mhzf-2}, can be written as
\begin{equation}\label{mat-tf-mhzf}
\begin{split}
&{\bf A_2}(\alpha_2-\alpha_1;s_1-1)
{\bf V}_{r-1}( s_1+s_2-1,s_3,\ldots,s_r; \alpha_2, \ldots, \alpha_r)\\
&={\bf A_1}(s_1-1)
{\bf V}_r(s_1,\ldots,s_r; \alpha_1, \ldots, \alpha_r).
\end{split}
\end{equation}
Here for an indeterminate $t$, we have
\begin{equation}
\label{eqA1}
{\bf A_1}(t) := \left( \begin{array}{c c c c}
t & \frac{t(t+1)}{2!} & \frac{t(t+1)(t+2)}{3!} & \cdots\\
0 & t+1 & \frac{(t+1)(t+2)}{2!} & \cdots\\
0 & 0 & t+2 & \cdots\\
\vdots & \vdots & \vdots & \ddots
\end{array} \right),
\end{equation}
\begin{equation}\label{eqA2}
{\bf A_2}(\alpha_2-\alpha_1;t) := \left( \begin{array}{c c c c}
1 & t(\alpha_2-\alpha_1) &  \frac{t(t+1)}{2!}(\alpha_2-\alpha_1)^2 & \cdots \\
0 & 1 & (t+1)(\alpha_2-\alpha_1) & \cdots \\
0 & 0 & 1 &  \cdots \\
\vdots & \vdots & \vdots & \ddots
\end{array} \right)
\end{equation}
and
\begin{equation}\label{eqV-mhzf}
{\bf V}_r(s_1,\ldots,s_r; \alpha_1, \ldots, \alpha_r)
:= \left( \begin{array}{c}
\z_r(s_1,s_2,\ldots,s_r;\alpha_1, \ldots, \alpha_r)\\
\z_r(s_1+1,s_2,\ldots,s_r;\alpha_1, \ldots, \alpha_r)\\
\z_r(s_1+2,s_3\ldots,s_r;\alpha_1, \ldots, \alpha_r)\\
\vdots
\end{array} \right).
\end{equation}
Note that the matrix ${\bf A_1}(t)$ can be written as
$$
{\bf A_1}(t) = {\bf \Delta}(t) f({\bf M}(t+1)),
$$
where $f$ is the formal power series
$$
f(x):= \frac{e^x-1}{x} = \sum_{n \ge 0} \frac{x^n}{(n+1)!},
$$
and ${\bf \Delta}(t), {\bf M}(t) $ are as follows:
$$
{\bf \Delta}(t) := \left( \begin{array}{c c c c}
t & 0 & 0 & \cdots \\
0 & t+1 & 0 & \cdots \\
0 & 0 & t+2 & \cdots\\
\vdots & \vdots & \vdots & \ddots
\end{array} \right) \ \text{ and } \
{\bf M}(t) := \left( \begin{array}{c c c c}
0 & t & 0 & \cdots\\
0 & 0 & t+1 & \cdots\\
0 & 0 & 0 & \cdots\\
\vdots & \vdots & \vdots & \ddots
\end{array} \right).
$$
It is easy to see that ${\bf \Delta}(t), {\bf M}(t) $ satisfy the
following commuting relation:
\begin{equation}\label{delta-M}
{\bf \Delta}(t) {\bf M}(t+1) = {\bf M}(t){\bf \Delta}(t).
\end{equation}
Thus using \eqref{delta-M}, we have
$$
{\bf A_1}(t) = f({\bf M}(t)) {\bf \Delta}(t).
$$
Further, it is also possible to write that
$$
{\bf A_2}(\alpha_2-\alpha_1; t)=h({\bf M}(t)),
$$
where $h$ denotes the power series
$$
e^{(\alpha_2-\alpha_1)x} = \sum_{n \ge 0} (\alpha_2-\alpha_1)^n \frac{x^n}{n!}.
$$
Clearly the matrix ${\bf A_2}(\alpha_2-\alpha_1; t)$ is invertible 
and we see that
$$
{\bf A_2}(\alpha_2-\alpha_1;t)^{-1} {\bf A_1}(t)
= \frac{f}{h}({\bf M}(t)) \ {\bf \Delta}(t)
= {\bf \Delta}(t) \ \frac{f}{h}({\bf M}(t+1)).
$$
Hence the inverse of the matrix 
${\bf A_2}(\alpha_2-\alpha_1;t)^{-1} {\bf A_1}(t)$ is given by
$$
{\bf B}(\alpha_2-\alpha_1;t):= {\bf A_1}(t)^{-1} {\bf A_2}(\alpha_2-\alpha_1;t)
=\frac{h}{f}({\bf M}(t+1)) \ {\bf \Delta}(t)^{-1}
={\bf \Delta}(t)^{-1} \ \frac{h}{f}({\bf M}(t)),
$$
where $\frac{h}{f}$ is the exponential
generating series of the Bernoulli polynomials
evaluated at the point $(\alpha_2-\alpha_1)$, i.e.
$$
\frac{h}{f}(x)=\frac{x e^{(\alpha_2-\alpha_1)x}}{e^x-1}
= \sum_{n \ge 0} \frac{B_n(\alpha_2-\alpha_1)}{n!}x^n.
$$
More precisely, we have
\begin{equation} \label{eqB2}
{\bf B}(\alpha_2-\alpha_1;t)= \left( \begin{array}{c c c c c}
\frac{1}{t} & \frac{B_1(\alpha_2-\alpha_1)}{1!} &
\frac{(t+1)B_2(\alpha_2-\alpha_1)}{2!} &
\frac{(t+1)(t+2)B_3(\alpha_2-\alpha_1)}{3!} & \cdots \\
0 & \frac{1}{t+1} & \frac{B_1(\alpha_2-\alpha_1)}{1!} &
\frac{(t+2)B_2(\alpha_2-\alpha_1)}{2!} & \cdots \\
0 & 0 & \frac{1}{t+2} & \frac{B_1(\alpha_2-\alpha_1)}{1!} & \cdots \\
0 & 0 & 0 & \frac{1}{t+3} & \cdots\\
\vdots & \vdots & \vdots & \vdots & \ddots
\end{array} \right).
\end{equation}

However, we can not express the column vector
${\bf V}_r(s_1,\ldots,s_r; \alpha_1, \ldots, \alpha_r)$
as the product of the matrix ${\bf B}(\alpha_2-\alpha_1;s_1-1)$
and the column vector
${\bf V}_{r-1}( s_1+s_2-1,s_3,\ldots,s_r; \alpha_2, \ldots, \alpha_r)$.
This is because the infinite series involved in this product
are not convergent. To get around this difficulty
we perform a truncation process.

We first rewrite \eqref{mat-tf-mhzf} in the form
\begin{equation}\label{mat-tf-mhzf-2}
\begin{split}
&{\bf \Delta}(s_1-1)^{-1}
{\bf V}_{r-1}( s_1+s_2-1,s_3,\ldots,s_r; \alpha_2, \ldots, \alpha_r)\\
&=\frac{f}{h}({\bf M}(s_1))
{\bf V}_r(s_1,\ldots,s_r; \alpha_1, \ldots, \alpha_r).
\end{split}
\end{equation}
For notational convenience, let us denote $\frac{f}{h}({\bf M}(s_1))$ by
${\bf X}(s_1)$. 
We then choose an integer $q \ge 1$ and define
$$
I := \{ k  ~|~ 0 \le k \le q-1 \}
\phantom{m}\text{and} \phantom{m}
J := \{ k ~|~ k \ge q \}.
$$ 
Then we write our matrices as block matrices, for example
$$
{\bf X}(s_1) = \left( \begin{array}{c c}
{\bf X}^{II}(s_1) & {\bf X}^{IJ}(s_1)\\
{\bf 0}^{JI} & {\bf X}^{JJ}(s_1)
\end{array} \right).
$$
Hence from \eqref{mat-tf-mhzf-2} we get that
\begin{equation}
\label{dvxvxv}
\begin{split}
& {\bf \Delta}^{II}(s_1-1)^{-1}
{\bf V}_{r-1}^I(s_1+s_2-1,s_3,\ldots,s_r; \alpha_2, \ldots, \alpha_r)\\
=&\ {\bf X}^{II}(s_1) {\bf V}_r^I(s_1,\ldots,s_r; \alpha_1, \ldots, \alpha_r)
+ {\bf X}^{IJ}(s_1) {\bf V}_r^J(s_1,\ldots,s_r; \alpha_1, \ldots, \alpha_r).
\end{split}
\end{equation}

Since ${\bf X}^{II}(s_1)$ is a finite invertible square matrix, we have
$$
{\bf X}^{II}(s_1)^{-1} {\bf \Delta}^{II}(s_1-1)^{-1}
= {\bf B}^{II}(\alpha_2-\alpha_1;s_1-1).
$$
Therefore we deduce from \eqref{dvxvxv} that
\begin{equation}\label{vbvy}
\begin{split}
&{\bf V}_r^I(s_1,\ldots,s_r; \alpha_1, \ldots, \alpha_r)\\
& = {\bf B}^{II}(\alpha_2-\alpha_1;s_1-1)
{\bf V}_{r-1}^I(s_1+s_2-1,s_3,\ldots,s_r; \alpha_2, \ldots, \alpha_r)\\
&+{\bf Y}^I(s_1,\ldots,s_r; \alpha_1, \ldots, \alpha_r) ,
\end{split}
\end{equation}
where
\begin{equation}
\label{yxxv}
{\bf Y}^I(s_1,\ldots,s_r; \alpha_1, \ldots, \alpha_r) =  - {\bf X}^{II}(s_1)^{-1}
{\bf X}^{IJ}(s_1) {\bf V}_r^J(s_1,\ldots,s_r; \alpha_1, \ldots, \alpha_r).
\end{equation}

All the series of meromorphic functions involved in the products of
matrices in formulas \eqref{vbvy} and \eqref{yxxv} converge normally
on all compact subsets of $\C^r$. Moreover, all entries of the
matrices on the right hand side of \eqref{yxxv} are holomorphic on
the open set $U_r(q)$, translate of $U_r$ by $(-q,0,\ldots,0)$. Therefore
the entries of ${\bf Y}^I(s_1,\ldots,s_r; \alpha_1, \ldots, \alpha_r)$ are also
holomorphic in $U_r(q)$.
Let us denote $\xi_q(s_1,\ldots,s_r; \alpha_1, \ldots, \alpha_r)$
to be the first entry of the column
vector ${\bf Y}^I(s_1,\ldots,s_r; \alpha_1, \ldots, \alpha_r)$. Then we 
get from \eqref{vbvy} that
\begin{equation}\label{explicit-mhzf}
\begin{split}
&\zeta_r(s_1,\ldots,s_r; \alpha_1, \ldots, \alpha_r)\\
&=\ \frac{1}{s_1-1} \zeta_{r-1}(s_1+s_2-1,s_3,\ldots,s_r; \alpha_2, \ldots, \alpha_r)\\
&+ \sum_{k=0}^{q-2} \frac{s_1\cdots (s_1+k-1)}{(k+1)!} 
B_{k+1}(\alpha_2-\alpha_1)
\ \zeta_{r-1}(s_1+s_2+k,s_3,\ldots,s_r; \alpha_2, \ldots, \alpha_r)\\
&+ \xi_q(s_1,\ldots,s_r; \alpha_1, \ldots, \alpha_r),
\end{split}
\end{equation}
and $\xi_q(s_1,\ldots,s_r; \alpha_1, \ldots, \alpha_r)$
is holomorphic in the open set $U_r(q)$. 
In the above formula, whenever empty products and empty sums appear,
they are assumed to be $1$ and $0$ respectively. Formula \eqref{explicit-mhzf}
can also be obtained by using the Euler-Maclaurin summation
formula which was done in~\cite{AI}.

\begin{rmk}\label{gen-Ber} \rm
Matrix formulation of the generalised Ramanujan's
identity \eqref{trans2} would be similar as above.
If one wants to write down a matrix formulation
for the identity
\eqref{trans} one encounters a family of polynomials
$P_n(a,c)$ which are defined by the generating series
$$
\frac{e^{ax}}{e^x-c}=\sum_{n \ge 0} P_n(a,c) \frac{x^n}{n!}
$$
with $c \neq 1$.
\end{rmk}

We now observe that the following theorem can be deduced
as an immediate consequence of \thmref{multiple-Lerch}.

\begin{thm}\label{poles-mhzf}
The multiple Hurwitz zeta function of depth $r$ 
can be meromorphically continued to $\C^r$
with possible simple poles along the hyperplanes
$H_{1,0}$ and $H_{i,k}$, where $2 \le i \le r$ and
$k \ge 0$. It has at most simple poles along
each of these hyperplanes.
\end{thm}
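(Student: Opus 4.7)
The plan is to deduce Theorem \ref{poles-mhzf} as a direct specialisation of \thmref{multiple-Lerch}. Setting $\lambda_1 = \cdots = \lambda_r = 0$ in the definition \eqref{lerch} recovers the multiple Hurwitz zeta function $\z_r(s_1, \ldots, s_r; \alpha_1, \ldots, \alpha_r)$, so the strategy is simply to apply \thmref{multiple-Lerch} with this choice of parameters and transcribe the conclusion.

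With all $\lambda_i = 0$, each partial sum $\mu_i = \lambda_1 + \cdots + \lambda_i$ vanishes and in particular lies in $\Z$. Hence, in the notation of \thmref{multiple-Lerch}, the indices $i_1 < \cdots < i_m$ for which $\mu_{i_j} \in \Z$ are precisely $1, 2, \ldots, r$; so $m = r$, $i_j = j$ for $1 \le j \le r$, and $i_1 = 1$, placing us in Case 1 of that theorem.

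Reading off the polar locus from Case 1, $\z_r$ admits a meromorphic continuation to $\C^r$ with at most simple poles along $H_{1,0}$ together with $H_{j,k}$ for $2 \le j \le r$ subject to $(j-k) \in \Z_{\le j}$. Since $i_j = j$, this last condition reduces to $k \ge 0$, so the list of candidate polar hyperplanes is exactly $H_{1,0}$ together with $H_{i,k}$ for $2 \le i \le r$ and $k \ge 0$, which is the assertion of \thmref{poles-mhzf}.

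There is essentially no obstacle here, as the statement is a formal corollary of \thmref{multiple-Lerch}; the only care required is to unwind the indexing convention and verify the simplification of the $\Z_{\le j}$-condition in the all-integer case. An alternative, self-contained route would be induction on $r$ via the explicit recursion \eqref{explicit-mhzf}, whose error term $\xi_q(s_1,\ldots,s_r;\alpha_1,\ldots,\alpha_r)$ is holomorphic on $U_r(q)$; letting $q$ range over $\N$ and applying the inductive hypothesis to each $\z_{r-1}(s_1 + s_2 + k, s_3, \ldots, s_r; \alpha_2, \ldots, \alpha_r)$ appearing in the finite $k$-sum produces the same list of at most simple polar hyperplanes.
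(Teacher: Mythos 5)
Your proposal is correct and coincides with the paper's own argument: the paper derives Theorem~\ref{poles-mhzf} exactly as an immediate consequence of Theorem~\ref{multiple-Lerch} with $\lambda_1=\cdots=\lambda_r=0$, so that $\mu_i=0\in\Z$ for all $i$, Case~1 applies with $i_j=j$, and the condition $(i_j-k)\in\Z_{\le j}$ reduces to $k\ge 0$. Your unwinding of the indexing is accurate, and the alternative route via \eqref{explicit-mhzf} that you sketch is also sound, though not needed.
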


To check if each $H_{i,k}$ is indeed a polar hyperplane, we compute
the residue of the multiple Hurwitz zeta function of depth $r$ along this
hyperplane using \eqref{vbvy} and \eqref{explicit-mhzf}.
Recall that it is defined as the restriction of the meromorphic function
$(s_1+\cdots+s_i-i+k) ~\zeta_r(s_1,\ldots,s_r; \alpha_1, \ldots, \alpha_r)$
to $H_{i,k}$.

\begin{thm}\label{residues-mhzf}
The residue of the multiple Hurwitz zeta function
$\zeta_{r}(s_1,\ldots,s_r;\alpha_1, \ldots, \alpha_r)$
along the hyperplane $H_{1,0}$ is the restriction of
$\zeta_{r-1}(s_2,\ldots,s_r;\alpha_2, \ldots, \alpha_r)$ 
to $H_{1,0}$ and its residue along the hyperplane $H_{i,k}$,
where $2 \le i \le r$ and $k \ge 0$, is the restriction to $H_{i,k}$
of the product of $\zeta_{r-i}(s_{i+1},\ldots,s_r;\alpha_{i+1}, \ldots, \alpha_r)$
with the $(0,k)^{\text{th}}$
entry of the matrix 
$$
\prod\limits_{d=1}^{i-1}
{\bf B}(\alpha_{d+1}-\alpha_d;s_1+\cdots+s_d-d).
$$
\end{thm}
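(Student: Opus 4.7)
The plan is to iterate the matrix identity \eqref{vbvy} exactly $(i-1)$ times, bringing the computation of the residue along $H_{i,k}$ down to a residue of a depth $(r-i+1)$ Hurwitz zeta function along its $H_{1,0}$, which serves as the base case.

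First I would establish the case $i=1$ directly from \eqref{explicit-mhzf}: for any choice of $q\ge 1$ the only term on the right-hand side that fails to be holomorphic at the generic point of $H_{1,0}$ is $\frac{1}{s_1-1}\zeta_{r-1}(s_1+s_2-1, s_3,\ldots,s_r;\alpha_2,\ldots,\alpha_r)$, so multiplying by $s_1-1$ and restricting to $H_{1,0}$ yields $\zeta_{r-1}(s_2,\ldots,s_r;\alpha_2,\ldots,\alpha_r)$, as required.

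For $2\le i\le r$, fix an integer $q>k$ and apply \eqref{vbvy} to $\mathbf{V}_r^I$; then reapply it to the resulting $\mathbf{V}_{r-1}^I$, now with first variable $s_1+s_2-1$ and $\alpha$-tuple $(\alpha_2,\ldots,\alpha_r)$; continue. After $i-1$ iterations I obtain
\begin{align*}
\mathbf{V}_r^I(s_1,\ldots,s_r;\alpha_1,\ldots,\alpha_r)
&= \Bigl(\prod_{d=1}^{i-1}\mathbf{B}^{II}\bigl(\alpha_{d+1}-\alpha_d;\,s_1+\cdots+s_d-d\bigr)\Bigr) \\
&\quad \cdot \mathbf{V}_{r-i+1}^I\bigl(s_1+\cdots+s_i-(i-1),s_{i+1},\ldots,s_r;\alpha_i,\ldots,\alpha_r\bigr) + \mathbf{Z}^I,
\end{align*}
where $\mathbf{Z}^I$ is built from the various $\mathbf{Y}^I$-type remainders of \eqref{yxxv} arising at each step, premultiplied by the earlier $\mathbf{B}^{II}$ blocks. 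Reading off the $0$-entry of both sides expresses $\zeta_r(s_1,\ldots,s_r;\alpha_1,\ldots,\alpha_r)$, up to the first entry of $\mathbf{Z}^I$, as a finite sum indexed by $0\le j\le q-1$ of $\bigl[\prod_d\mathbf{B}^{II}\bigr]_{0,j}$ times $\zeta_{r-i+1}(s_1+\cdots+s_i-(i-1)+j,s_{i+1},\ldots,s_r;\alpha_i,\ldots,\alpha_r)$. At the generic point of $H_{i,k}$, where $s_1+\cdots+s_i=i-k$, only the summand $j=k$ has a pole; its residue is $\zeta_{r-i}(s_{i+1},\ldots,s_r;\alpha_{i+1},\ldots,\alpha_r)$ by the base case applied in depth $r-i+1$. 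Since $k<q$, the $(0,k)$-entry of $\prod_d\mathbf{B}^{II}$ coincides with that of the infinite product $\prod_d\mathbf{B}$, and the theorem follows.

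The main obstacle will be controlling $\mathbf{Z}^I$: I must verify that each $\mathbf{Y}^I$-type term introduced at step $\ell$ is holomorphic at the generic point of $H_{i,k}$ (this is ensured by the $U(q)$-holomorphicity stated after \eqref{yxxv}, since $q>k$), and that left-multiplication by the earlier $\mathbf{B}^{II}$ blocks does not produce a spurious pole along $H_{i,k}$. The latter holds because the entries of $\mathbf{B}^{II}$ have poles only along the hyperplanes $s_1+\cdots+s_d=d$ for $1\le d\le i-1$ (coming from the diagonal factors in \eqref{eqB2}), and these are distinct from $H_{i,k}$ at its generic point; once this is confirmed, $\mathbf{Z}^I$ does not contribute to the residue along $H_{i,k}$ and the formula is delivered.
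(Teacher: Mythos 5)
Your proposal follows essentially the same route as the paper's proof: the base case for $H_{1,0}$ read off from \eqref{explicit-mhzf}, the $(i-1)$-fold iteration of \eqref{vbvy} with a truncation parameter $q>k$, the observation that the accumulated remainder and the entries of $\prod_d {\bf B}^{II}$ (rational in $s_1,\ldots,s_{i-1}$ only) contribute no pole along $H_{i,k}$ in $U_r(q)$, the identification of the index-$k$ entry of ${\bf V}_{r-i+1}^I$ as the unique polar term, and the covering of $\C^r$ by the sets $U_r(q)$. The argument is correct and matches the paper's in all essentials.
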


\begin{proof}
Let $q \ge 1$ be an integer.  As in the proof of Theorem~\ref{poles-mhzf},
we know from \eqref{explicit-mhzf} that
$$
\zeta_r(s_1,\ldots,s_r;\alpha_1, \ldots, \alpha_r) -
\frac{1}{s_1-1} \ \zeta_{r-1}(s_1+s_2-1,s_3,\ldots,s_r;\alpha_2, \ldots, \alpha_r)
$$
has no pole along $H_{1,0}$ inside the open set $U_r(q)$. These open sets
cover $\C^r$. Hence the residue of $\zeta_r(s_1,\ldots,s_r;\alpha_1, \ldots, \alpha_r)$
along $H_{1,0}$ is the restriction to $H_{1,0}$ of the meromorphic function
$\zeta_{r-1}(s_1+s_2-1,s_3,\ldots,s_r;\alpha_2, \ldots, \alpha_r)$ or equivalently of
$\zeta_{r-1}(s_2,\ldots,s_r;\alpha_2, \ldots, \alpha_r)$. This proves the first part of
\thmref{residues-mhzf}.

Now let $i, k$ be integers with $2 \le i \le r$ and
$0 \le k < q$. Also let $I$ and  $J$ be as in \S4.4.
Now if one iterates $(i-1)$ times the formula \eqref{vbvy}, one gets
\begin{equation*}
\begin{split}
{\bf V}_r^I(s_1,\ldots,s_r; \alpha_1, \ldots, \alpha_r)
= & \ \left(\prod_{d=1}^{i-1}
{\bf B}^{II}(\alpha_{d+1}-\alpha_d;s_1+\cdots+s_d-d)\right)\\
&\times {\bf V}_{r-i+1}^I (s_1+\cdots +s_i-i+1,s_{i+1},\ldots,s_r;
\alpha_i, \ldots, \alpha_r)\\
&+ {\bf Y}^{i,I} (s_1,\ldots,s_r;\alpha_1, \ldots, \alpha_r),
\end{split}
\end{equation*}
where ${\bf Y}^{i,I} (s_1,\ldots,s_r;\alpha_1, \ldots, \alpha_r)$ is a column
matrix whose entries are finite
sums of products of rational functions in $s_1,\ldots,s_{i-1}$ with meromorphic
functions which are holomorphic in $U_r(q)$. These entries therefore have no
pole along the hyperplane $H_{i,k}$ in $U_r(q)$.
The entries of 
$$
\prod_{d=1}^{i-1} {\bf B}^{II}(\alpha_{d+1}-\alpha_d;s_1+\cdots+s_d-d)
$$ 
are rational functions in $s_1,\ldots,s_{i-1}$ and hence
have no poles along $H_{i,k}$.
It now follows from the induction hypothesis that the only entry 
of ${\bf V}_{r-i+1}^I (s_1+\cdots +s_i-i+1,s_{i+1},\ldots,s_r; \alpha_i, \ldots, \alpha_r)$
that can possibly have a pole along $H_{i,k}$ in $U_r(q)$ is
the one of index $k$, which is
$$
\zeta_{r-i +1} (s_1 + \ldots + s_i - i + k +1, s_{i +1}, \ldots, s_r;
\alpha_i, \ldots, \alpha_r).
$$
Its residue is the restriction of
$\zeta_{r-i}(s_{i+1},\ldots,s_r; \alpha_{i+1}, \ldots, \alpha_r)$ to
$H_{i,k} \cap U_r(q)$, where $2 \le i \le r$ and $0~\le~k < q$.
Since the open sets $U_r(q)$ for $q > k$
cover $\C^r$, the residue of $\zeta_r(s_1,\ldots,s_r; \alpha_1, \ldots, \alpha_r)$ 
along $H_{i,k}$ is the restriction to $H_{i,k}$ of the product of the
$(0,k)^{\text{th}}$ entry of the matrix
$$
\prod_{d=1}^{i-1} {\bf B}(\alpha_{d+1}-\alpha_d;s_1+\cdots+s_d-d)
$$
with $\zeta_{r-i}(s_{i+1},\ldots,s_r; \alpha_{i+1}, \ldots, \alpha_r)$.
This proves the last part of \thmref{residues-mhzf}.
\end{proof}

\section{Proof of \thmref{multiple-Hurwitz}}

\subsection{Zeros of Bernoulli polynomials}
The information about the exact set of poles of 
multiple Hurwitz zeta functions in \thmref{multiple-Hurwitz} 
requires knowledge about the
zeros of the Bernoulli polynomials.
In this section, we discuss those properties of the zeros
of the Bernoulli polynomials
which are relevant to our study.

Recall that the Bernoulli polynomials $B_n(t)$ are defined by
$$
\sum_{n \ge 0} B_n(t) \frac{x^n}{n!} = \frac{x e^{tx}}{e^x -1}.
$$
We have the following theorem by Brillhart \cite{JB} and
Dilcher \cite{KD} about the zeros of Bernoulli polynomials.

\begin{thm}[Brillhart-Dilcher]\label{Brill-Dil}
Bernoulli polynomials do not have multiple roots.
\end{thm}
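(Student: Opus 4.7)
The plan is to exploit the differentiation rule $B_n'(t) = n\,B_{n-1}(t)$, which follows at once from differentiating the generating series $xe^{tx}/(e^x-1)$ with respect to $t$. Under this identity, any multiple root $\tau$ of $B_n(t)$ is simultaneously a zero of $B_n$ and of $B_{n-1}$, so the theorem reduces to showing that consecutive Bernoulli polynomials share no root. I would then split the analysis into real and complex zeros, as the tools available in the two regimes differ substantially.

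For the real zeros, the key structural inputs are the reflection symmetry $B_n(1-t) = (-1)^n B_n(t)$, the closed-form values of $B_n(0), B_n(1/2), B_n(1)$ in terms of classical Bernoulli numbers, and Rolle's theorem applied through the derivative identity. An induction on $n$, present in the classical work of Lehmer and Inkeri, shows that the real zeros of $B_n$ are all simple and that the real zeros of $B_n$ and $B_{n-1}$ interlace; this rules out a common real zero.

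The genuine obstacle is ruling out common complex zeros, and this is the heart of the Brillhart--Dilcher argument. I would proceed by locating the complex zeros of $B_n$ asymptotically as $n \to \infty$: a saddle-point / Hankel-contour analysis of the integral representation obtained from the generating series pins the zeros of $B_n$ to explicit analytic arcs in $\C$, and yields an estimate for each zero precise enough to separate it from any zero of $B_{n-1}$. This takes care of all sufficiently large $n$. The remaining finite list of small degrees is then settled by a direct computation of the resultant $\operatorname{Res}_t(B_n, B_{n-1}) \in \Q$, verified to be nonzero case by case; this is essentially how Brillhart handled the initial cases before Dilcher extended the argument to all $n$.

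I expect the hard step to be obtaining the asymptotic localisation of the complex zeros with a precision uniform in $n$ that is strong enough to separate the zeros of $B_n$ from those of $B_{n-1}$. By comparison, the real-zero analysis and the finite resultant verification are routine, and in any case entirely standard once the generating function identities and the interlacing consequences of $B_n' = nB_{n-1}$ have been set up.
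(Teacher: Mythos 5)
The paper offers no proof of this theorem: it is imported from Brillhart \cite{JB} (odd index) and Dilcher \cite{KD} (even index), and the only step the authors themselves carry out is the observation you also begin with, namely that $B_n'(t)=nB_{n-1}(t)$ makes the statement equivalent to $B_n$ and $B_{n-1}$ being coprime. Up to that reduction your write-up matches the paper; everything after it is your own proposed route, and that route has genuine gaps.

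First, the real case is not routine. Inkeri's theorem concerns only \emph{rational} zeros, and the classical location results describe the real zeros of a single $B_n$; the claimed interlacing of the real zeros of $B_n$ and $B_{n-1}$ is essentially the theorem restricted to $\R$, not an input to it. Rolle's theorem produces a zero of $B_{n-1}$ strictly between two \emph{distinct} real zeros of $B_n$ and says nothing against a point where $B_n$ vanishes to order two, so the induction you gesture at does not get off the ground without further ideas. Second, and more seriously, the complex step cannot be closed as described. The known asymptotic localisation of the zeros of Bernoulli polynomials places the (suitably rescaled) zeros of $B_n$ and of $B_{n-1}$ on the \emph{same} limiting arcs, with consecutive zeros separated by a distance comparable to the accuracy a saddle-point expansion would naturally deliver; to separate a zero of $B_n$ from the nearest zero of $B_{n-1}$ one would need effective higher-order asymptotics, uniform in $n$, together with an explicit threshold $n_0$ so that the residual verification $\operatorname{Res}_t(B_n,B_{n-1})\neq 0$ becomes a finite computation. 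None of this is supplied or referenced, and it is not how Brillhart and Dilcher argue: their proofs are arithmetic, resting on the symmetry $B_n(1-t)=(-1)^nB_n(t)$ (which exhibits $B_n$ as a polynomial in $t^2-t$, times $t(t-1)(2t-1)$ for odd $n\ge 3$), the von Staudt--Clausen theorem, and $p$-adic (notably $2$-adic and $3$-adic) Newton-polygon information about the coefficients. Your attribution of a ``small cases by resultant, large cases by asymptotics'' split to Brillhart and Dilcher is inaccurate. If you want a self-contained proof to cite or reproduce, follow the arithmetic route of \cite{JB} and \cite{KD}; the analytic route you sketch is, as far as is known, not realisable.
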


This theorem was first proved for the odd Bernoulli
polynomials by Brillhart
\cite{JB} and later extended for the even
Bernoulli polynomials by Dilcher \cite{KD}.
\thmref{Brill-Dil}
amounts to say that the Bernoulli polynomials $B_{n+1}(t)$ and
$B_n (t)$ are relatively prime as they satisfy the relation
$$
B_{n+1}'(t) = (n+1) B_n(t) \ \text{ for all } n \ge 1.
$$
where $B'_{n+1}(t)$ denotes the derivative of the polynomial $B_{n+1}(t)$.
With the theorem of Brillhart and Dilcher in place we can now
describe the exact set of singularities of the multiple zeta
functions. For that it is convenient to have some intermediate
lemmas in place.

\subsection{Some intermediate lemmas}

\begin{lem}\label{two-prod}
Let $x,y$ be two indeterminate and the matrix 
$\bf{B}$ be as in \eqref{eqB2}. 
Then all the entries in the first row of the matrix
$$
{\bf B}(\beta - \alpha; x) \ {\bf B}(\gamma - \beta; y),
$$
where $0 \le \alpha, \beta, \gamma< 1$, are non-zero rational
functions in $x,y$ with coefficients in $\R$.
\end{lem}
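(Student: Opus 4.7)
The plan is to write the $(0,k)$-entry of the product as an explicit rational function in $x, y$ and then use Theorem~\ref{Brill-Dil} together with the constraint on $\alpha, \beta, \gamma$ to exclude its vanishing. Setting $a := \beta - \alpha$ and $b := \gamma - \beta$, the hypothesis $0 \le \alpha, \beta, \gamma < 1$ gives $a + b = \gamma - \alpha \in (-1, 1)$. Since every entry of ${\bf B}$ in \eqref{eqB2} lies in $\R(t)$, the entries of the product automatically lie in $\R(x, y)$; only the non-vanishing requires work.

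Using \eqref{eqB2} directly, the $(0,k)$-entry $E_k(x, y)$ of ${\bf B}(\beta-\alpha; x)\,{\bf B}(\gamma-\beta; y)$ equals $1/(xy)$ for $k = 0$, while for $k \ge 1$ it decomposes as
\[
E_k(x, y) \;=\; \frac{B_k(b)\,\prod_{\ell=1}^{k-1}(y+\ell)}{k!\,x} \;+\; \frac{B_k(a)\,\prod_{\ell=1}^{k-1}(x+\ell)}{k!\,(y+k)} \;+\; \sum_{j=1}^{k-1} P_j(x)\, Q_j(y),
\]
where $P_j(x)$ is a polynomial of degree $j-1$ in $x$ with leading coefficient $B_j(a)/j!$ and $Q_j(y)$ is a polynomial of degree $k - j - 1$ in $y$ with leading coefficient $B_{k-j}(b)/(k-j)!$. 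Assuming for contradiction that $E_k \equiv 0$ and multiplying through by $x(y+k)$, specialization at $x = 0$ and at $y = -k$ kills all but one term in each case and forces $B_k(b) = 0$ and $B_k(a) = 0$. The residual identity $\sum_{j=1}^{k-1} P_j(x) Q_j(y) \equiv 0$ (empty when $k = 1$) is then analyzed by extracting the coefficient of $x^{j-1} y^{k-j-1}$: the degree bounds on $P_{j'}, Q_{j'}$ for $j' \neq j$ ensure that only the $j$-th summand contributes, so one obtains $B_j(a)\, B_{k-j}(b) = 0$ for $1 \le j \le k - 1$.

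To finish, invoke the identity $B_n'(t) = n B_{n-1}(t)$: Theorem~\ref{Brill-Dil} implies that $B_n$ and $B_{n-1}$ share no common root. Thus $B_k(a) = 0$ forces $B_{k-1}(a) \neq 0$, and symmetrically $B_{k-1}(b) \neq 0$. For $k \ge 2$, taking $j = k - 1$ in the vanishing gives $B_1(b) = 0$ and $j = 1$ gives $B_1(a) = 0$; for $k = 1$ these equalities follow directly from the preceding step. Either way $a = b = 1/2$, whence $a + b = 1$, contradicting $a + b \in (-1, 1)$. The main obstacle is the combinatorial bookkeeping in the middle sum that isolates a single Bernoulli product per monomial; once that degree structure is set up, the Brillhart--Dilcher input cleanly closes the argument.
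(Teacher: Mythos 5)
Your proof is correct and follows essentially the same route as the paper: expand the $(0,k)$-entry as $\sum_{j} B_j(\beta-\alpha)B_{k-j}(\gamma-\beta)$ times a family of polynomials in $x,y$ that is linearly independent over $\R$, then combine \thmref{Brill-Dil} (consecutive Bernoulli polynomials share no zero) with the fact that $\beta-\alpha$ and $\gamma-\beta$ cannot both equal $1/2$ since $\gamma-\alpha\in(-1,1)$. The only difference is organizational: you argue by contradiction and justify the linear independence explicitly via the bidegree bookkeeping, whereas the paper asserts the independence of the family $\{(x)_{i-1}(y+i+1)_{k-i-1}\}$ and directly exhibits one non-vanishing coefficient in each of two cases.
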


\begin{proof}
Since entries of these matrices are indexed by $\N \times \N$, the entries
of the first row are written as $(0,k)^{\text{th}}$ entry
for $k \ge 0$. Let us denote the $(0,k)^{\text{th}}$ entry
by $a_{0,k}$. Then we have the following formula:
$$
x (y+k) \ a_{0,k} = \sum_{i=0}^{k} (x)_{i-1} (y+i+1)_{k-i-1}
B_i(\beta-\alpha) \ B_{k-i}(\gamma - \beta)
$$
for all $k \ge 0$. As the Bernoulli polynomial $B_0(t)$ is equal to $1$,
we get $a_{0,0}= \frac{1}{xy}$ and hence non-zero.
For $k \ge 1$, we first note that the set of polynomials
$$
P:=\{(x)_{i-1} (y+i+1)_{k-i-1} : 0 \le i \le k \}
$$
is linearly independent over $\R$.
 
Now suppose that $B_1(\beta-\alpha) \neq 0$. We know by
\thmref{Brill-Dil} that at least one of $B_k(\gamma - \beta)$
and $B_{k-1}(\gamma - \beta)$ is non-zero. It now follows 
from the linear independence of the set of polynomials in $P$
that $a_{0,k} \neq 0$.

Next suppose that $B_1(\beta-\alpha) = 0$, i.e. $\beta-\alpha = 1/2$.
Then $\gamma-\beta \neq 1/2$ as $0 \le \alpha, \gamma< 1$.
Hence $B_1(\gamma-\beta) \neq 0$. Again by
\thmref{Brill-Dil}, we know that at least one of $B_k(\beta-\alpha)$
and  $B_{k-1}(\beta-\alpha)$ is non-zero. Now by
linear independence of the set of polynomials in $P$, we get
$a_{0,k} \neq 0$. This completes the proof of \lemref{two-prod}.
\end{proof}

\begin{lem}\label{any-prod}
Let $n \ge 0$ be an integer and $x,x_1, \ldots, x_n$ be $(n+1)$ 
indeterminate. Let ${\bf D}$ be an infinite square matrix whose
entries are indexed by $\N \times \N$ and is in the ring
$\R(x_1, \ldots, x_n)$. Further, suppose that all the entries in the
first row of ${\bf D}$ are non-zero. Then for any $\alpha, \beta \in \R$,
all the entries in the first row of the matrix ${\bf D}{\bf B}(\beta - \alpha; x)$
are non-zero, where the matrix $\bf{B}$ be as in \eqref{eqB2}.
\end{lem}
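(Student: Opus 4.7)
The plan is to exploit the very simple pole structure of the matrix $\mathbf{B}(\beta-\alpha;x)$ in the variable $x$, treating the entries $D_{0,k}\in\R(x_1,\ldots,x_n)$ as ``constants'' with respect to $x$. The key observation is that in the $j$-th column of $\mathbf{B}(\beta-\alpha;x)$, only the diagonal entry $\tfrac{1}{x+j}$ has a pole at $x=-j$; all other non-zero entries $B_{k,j}(\beta-\alpha;x)$ for $k<j$ are polynomials in $x$ (their $x$-factors $(x+k+1)(x+k+2)\cdots(x+j-1)$ never include $x+j$), with the scalar coefficient $B_{j-k}(\beta-\alpha)/(j-k)!$. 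This lets me isolate the pole and rule out cancellation.

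More concretely, I would first write down, using the upper-triangularity of $\mathbf{B}$, the $(0,j)$-th entry of the product as the finite sum
$$
\bigl(\mathbf{D}\,\mathbf{B}(\beta-\alpha;x)\bigr)_{0,j}
\;=\;\sum_{k=0}^{j} D_{0,k}\, B_{k,j}(\beta-\alpha;x).
$$
Separating the $k=j$ term, this reads $\dfrac{D_{0,j}}{x+j} + P_j(x)$, where $P_j(x)\in \R(x_1,\ldots,x_n)[x]$ is a polynomial in $x$ arising from the terms with $k<j$.

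Now I would view this sum as an element of the rational function field $\R(x_1,\ldots,x_n)(x)$. Since $D_{0,j}\ne 0$ by hypothesis and does not involve $x$, the first piece $\tfrac{D_{0,j}}{x+j}$ has a genuine simple pole at $x=-j$, whereas the polynomial part $P_j(x)$ is regular there. Hence the whole sum has a simple pole at $x=-j$ and therefore cannot vanish identically. For the base case $j=0$ the sum degenerates to $D_{0,0}/x$, which is non-zero because $D_{0,0}\ne 0$.

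There is no substantive obstacle; the only point demanding attention is that the poles and the coefficients $D_{0,k}$ live in disjoint sets of indeterminates, so no algebraic relation among the $x_i$ can kill $D_{0,j}$. This is guaranteed by the hypothesis that $D_{0,j}\ne 0$ in $\R(x_1,\ldots,x_n)$. The lemma then follows for every $j\ge 0$, completing the proof.
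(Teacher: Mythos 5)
Your proof is correct, and it follows the same basic decomposition as the paper: both write the $(0,j)$-th entry of ${\bf D}{\bf B}(\beta-\alpha;x)$ as the finite sum $\sum_{k=0}^{j} D_{0,k}\,B_{k,j}(\beta-\alpha;x)$ over the $j$-th column of ${\bf B}$ and exploit the fact that $x$ is an indeterminate independent of $x_1,\ldots,x_n$. The difference lies in how non-vanishing is extracted. The paper's (very terse) proof asserts that the non-zero entries of each column of ${\bf B}(\beta-\alpha;x)$ are linearly independent over $\R$ as rational functions of $x$, and concludes from the non-vanishing of the $D_{0,k}$; implicitly this uses that linear independence over $\R$ upgrades to linear independence over $\R(x_1,\ldots,x_n)$ because $x$ is a fresh variable. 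Your argument replaces the linear-independence claim by a sharper and more self-contained observation: only the diagonal entry $1/(x+j)$ of the $j$-th column has a pole at $x=-j$, while the entries above it are polynomials in $x$ (their factors stop at $x+j-1$), so the product entry equals $D_{0,j}/(x+j)$ plus something regular at $x=-j$ in $\R(x_1,\ldots,x_n)(x)$, and $D_{0,j}\neq 0$ forces a genuine pole. This buys you two things: you never need to know which entries of the column are non-zero (i.e.\ which Bernoulli values $B_{j-k}(\beta-\alpha)$ vanish), and the role of the hypothesis that $D_{0,j}$ lives in the $x$-free field $\R(x_1,\ldots,x_n)$ is made completely explicit. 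In effect your pole computation is a clean proof of the linear-independence statement the paper takes for granted.
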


\begin{proof}
We first note that each column of
${\bf B}(\beta - \alpha; x)$ has at least one non-zero entry and
the non-zero entries of each of these columns are linearly independent over $\R$
as rational functions in $x$ with coefficients in $\R$.
Since all the entries in the first row of ${\bf D}$ are non-zero,
the proof is complete by the above observation.
\end{proof}

We are now ready to prove \thmref{multiple-Hurwitz}.

\subsection{Proof of \thmref{multiple-Hurwitz}}
When $1 \le i \le r$ and $k \ge 0$, the restriction of
$$
\zeta_{r-i}(s_{i+1},\ldots,s_r,\alpha_{i+1}, \ldots, \alpha_r)
$$
to $H_{i,k}$ is a non-zero meromorphic function.
Hence in order to prove \thmref{multiple-Hurwitz},
we need to show that when $2 \le i \le r$ and $k \ge 0$, 
the $(0,k)^{\text{th}}$ entry of the matrix
$$
\prod\limits_{d=1}^{i-1}
{\bf B}(\alpha_{d+1}-\alpha_d;s_1+\cdots+s_d-d)
$$
is identically zero if and only if $i=2, \ k \in J$.
By changing co-ordinates, the above statement is 
equivalent to say that when
$t_1,\ldots,t_{i-1}$ are indeterminate, the $(0,k)^{\text{th}}$ entry
of the matrix 
$$
\prod\limits_{d=1}^{i-1}{\bf B}(\alpha_{d+1}-\alpha_d;t_d)
$$
is non-zero in $\R(t_1,\ldots,t_{i-1})$ except
when $i=2$ and $k \in J$.

For $i=2$, our matrix is ${\bf B}(\alpha_2-\alpha_1;t_1)$ and
hence our assertion follows immediately. Now assume that $i \ge 3$.
By \lemref{two-prod}, we know that  all the entries in the first row
of the matrix 
$$
{\bf B}(\alpha_2-\alpha_1;t_1) {\bf B}(\alpha_3-\alpha_2;t_2)
$$
is non-zero in $\R(t_1,t_2)$. Hence the theorem follows
from \lemref{two-prod} if $i=3$ and from repeated
application of \lemref{any-prod} if $i > 3$. \qed

\subsection{A particular case}

\thmref{multiple-Hurwitz} shows that precise knowledge 
about zeros of Bernoulli polynomials determines
the exact set of singularities of the multiple Hurwitz zeta
functions. Now we have precise knowledge about the
rational zeros of the Bernoulli polynomials due to Inkeri \cite{KI}.

\begin{thm}[Inkeri]\label{Inkeri}
The rational zeros of a Bernoulli polynomial $B_n (t)$
can only be $0, 1/2$ and $1$. This happens only when
$n$ is odd and precisely in the following cases:
\begin{enumerate}
\item $B_n(0)=B_n(1)=0$ for all odd $n \ge 3$,
\item $B_n(1/2)=0$ for all odd $n \ge 1$.
\end{enumerate}
\end{thm}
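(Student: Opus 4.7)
The plan is to treat the two directions separately. The easy direction, showing that the listed values really are zeros, I would establish using two fundamental identities: the reflection formula $B_n(1-t)=(-1)^n B_n(t)$ and the identification $B_n(0)=B_n$ (the $n$-th Bernoulli number). For case (2), substituting $t=1/2$ in the reflection formula with $n$ odd gives $B_n(1/2)=-B_n(1/2)$, so $B_n(1/2)=0$. For case (1), the classical vanishing of odd-indexed Bernoulli numbers (a consequence of the evenness of $\frac{x}{e^x-1}+\frac{x}{2}$) yields $B_n(0)=B_n=0$ for odd $n \ge 3$, and then $B_n(1)=(-1)^n B_n(0)=0$ as well.

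For the hard direction, that no other rational zero exists, my approach would be $p$-adic. Suppose $a/b$ is a rational zero in lowest terms with $b>2$ or $a/b \notin \{0,1/2,1\}$. Expanding
\begin{equation*}
0 = b^n B_n(a/b) = \sum_{k=0}^n \binom{n}{k} B_k \, a^{n-k} b^k,
\end{equation*}
and reading this equation in $\Z_p$ for carefully chosen primes $p$ would produce divisibility constraints that cannot be simultaneously satisfied. The crucial ingredient is the von Staudt--Clausen theorem, which pins down the $p$-adic valuation of the Bernoulli numbers: $pB_n \equiv -1 \pmod p$ whenever $(p-1)\mid n$, while $B_n$ is $p$-integral otherwise. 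So the denominators of the $B_k$ are transparent, which is what makes the $p$-adic bookkeeping feasible.

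Concretely I would split by parity of $n$. For even $n \ge 2$, I would first rule out $0,1/2,1$ directly (they give $B_n$ or $(2^{1-n}-1)B_n$, both nonzero for even $n \ge 2$) and then run the $p$-adic argument above for any other hypothetical rational zero. For odd $n$, I would factor out the known zeros by writing $B_n(t)=t(t-1)(t-\tfrac12)\,Q_n(t)$ with $Q_n \in \Q[t]$ of degree $n-3$ (for $n \ge 3$), and reduce the claim to the statement that $Q_n$ has no rational zeros, to which the same $p$-adic method applies.

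The main obstacle I anticipate is the delicate interplay, in the $p$-adic estimate of $b^n B_n(a/b)$, between the primes dividing $b$ and the primes $p$ with $(p-1)\mid n$ (the ``irregular denominators'' of $B_n$). The prime $p=2$ is usually the most delicate, and the low primes dividing $b$ require individual attention; one typically has to choose the test prime $p$ as a function of $b$ and $n$ to extract a clean contradiction. This is precisely where Inkeri's original argument does the work, and reproducing it in detail would require a careful case analysis of the $v_p$-valuations of each summand in the expanded identity above.
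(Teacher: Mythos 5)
First, a remark on scope: the paper does not prove this statement at all --- it is quoted from Inkeri \cite{KI} as a classical input to Corollary~\ref{special-mhzf} --- so there is no in-paper argument to measure yours against; any proof you supply is extra. Your forward direction is complete and correct: the reflection formula $B_n(1-t)=(-1)^nB_n(t)$ gives $B_n(1/2)=0$ for odd $n$ and $B_n(1)=-B_n(0)$ for odd $n$, the vanishing of $B_n=B_n(0)$ for odd $n\ge 3$ is classical, and for even $n$ the non-vanishing of $B_n$ together with $B_n(1/2)=(2^{1-n}-1)B_n$ settles exactly which of $0,1/2,1$ occur and for which $n$.

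The converse, however, is a plan rather than a proof, and the gap is not merely one of bookkeeping. Two concrete problems. (a) A valuation comparison of the summands of $b^nB_n(a/b)=\sum_{k}\binom{n}{k}B_k\,a^{n-k}b^k$ via von Staudt--Clausen does work cleanly for odd primes $p\mid b$ (the $k=0$ term $a^n$ is a $p$-unit while every other term has $v_p\ge \min(e,\,2e-1)\ge 1$ where $e=v_p(b)$), but at $p=2$ with $v_2(b)=1$ and $n$ odd the $k=0$ and $k=1$ terms are both $2$-units and their sum $a^{n-1}(a-n)$ is even, so the leading terms cancel and no contradiction results; this is exactly the case you flag as ``delicate'' and defer, yet it is the heart of the matter. (b) More seriously, even a perfect execution of the valuation argument can only bound the denominator, forcing $b\in\{1,2\}$; it is structurally incapable of distinguishing $1/2$ from $3/2$, or $0$ from $2$, since these have identical denominators. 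Excluding integer zeros other than $0,1$ and half-integer zeros other than $1/2$ needs a different ingredient --- e.g.\ the difference equation $B_n(t+1)-B_n(t)=nt^{n-1}$ combined with the reflection formula (giving strict positivity of $B_n$ at the remaining half-integers for odd $n$) and, for even $n$, the non-integrality of $B_n$ --- and this step is entirely absent from your outline. Finally, the proposed factorization $B_n(t)=t(t-1)(t-\tfrac12)Q_n(t)$ does not help: von Staudt--Clausen controls the denominators of the coefficients of $B_n$, not of $Q_n$, so ``the same $p$-adic method'' does not transfer to $Q_n$; one should instead argue directly on $B_n(a/b)=0$ for $a/b\notin\{0,1/2,1\}$.
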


Using \thmref{Inkeri}, we deduce the following corollary
of \thmref{multiple-Hurwitz}. A particular case of this
corollary,  namely when
$\alpha_i \in \Q$ for all $1 \le i \le r$, was proved in \cite{AI}. 

\begin{cor}\label{special-mhzf}
If $\alpha_2-\alpha_1=0$, then the exact set of singularities of the
multiple Hurwitz zeta function
$\zeta_{r}(s_1,\ldots,s_r;\alpha_1, \ldots, \alpha_r)$ is given by the
hyperplanes
$$
H_{1,0}, H_{2,1}, H_{2,2k} \ \text{ and }\
H_{i,k}  \ \text{ for all } \ k \ge 0\ \text{ and } \ 3 \le i \le r.
$$
If $\alpha_2-\alpha_1=1/2$, then the exact set of singularities of the
multiple Hurwitz zeta function
$\zeta_{r}(s_1,\ldots,s_r;\alpha_1, \ldots, \alpha_r)$ is given by the
hyperplanes
$$
H_{1,0}, H_{2,2k} \ \text{ and }\
H_{i,k}  \ \text{ for all } \ k \ge 0\ \text{ and } \ 3 \le i \le r.
$$
If $\alpha_2-\alpha_1$ is a rational number $\neq 0, 1/2$,
then the exact set of singularities of the
multiple Hurwitz zeta function
$\zeta_{r}(s_1,\ldots,s_r;\alpha_1, \ldots, \alpha_r)$ is given by the
hyperplanes
$$
H_{1,0} \ \text{ and }\
H_{i,k}  \ \text{ for all } \ k \ge 0\ \text{ and } \ 2 \le i \le r.
$$
\end{cor}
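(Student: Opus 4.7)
The corollary is an immediate reading of Theorem~\ref{multiple-Hurwitz} once we determine the set
$$
K = \{ n \in \N ~|~ B_n(\alpha_2 - \alpha_1) = 0 \}
$$
in each of the three stipulated cases. Thus the plan is to keep the three cases strictly parallel and, in each, identify the zeros of $B_n$ evaluated at the prescribed argument by applying Inkeri's theorem (\thmref{Inkeri}). Note first that since $\alpha_1, \alpha_2 \in [0,1)$, the difference $\alpha_2 - \alpha_1$ lies in the open interval $(-1,1)$, so in particular it is never equal to $1$; hence the only values in $[0,1)$ at which a Bernoulli polynomial can rationally vanish (in light of \thmref{Inkeri}) are $0$ and $1/2$.

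For the case $\alpha_2 - \alpha_1 = 0$, I would use $B_n(0) = B_n$, the $n$-th Bernoulli number. Since $B_0 = 1$, $B_1 = -\tfrac{1}{2}$, $B_{2k} \neq 0$ for $k \geq 1$ and $B_{2k+1} = 0$ for $k \geq 1$, one reads off $K = \{\,n \geq 3 : n \text{ odd}\,\}$. Feeding this into \thmref{multiple-Hurwitz} shows the $i=2$ singular hyperplanes are precisely $H_{2,0}, H_{2,1}$ and $H_{2,2k}$ for $k \geq 1$, which combines with $H_{1,0}$ and the hyperplanes for $i \geq 3$ to give the first assertion. For the case $\alpha_2 - \alpha_1 = 1/2$, I would invoke the classical identity $B_n(1/2) = (2^{1-n} - 1) B_n$: for $n = 0$ this is $1 \neq 0$; for odd $n \geq 1$ it vanishes (either because $n=1$ gives factor $0$, or because $B_n = 0$ for odd $n \geq 3$); for even $n \geq 2$ both factors are non-zero. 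Hence $K = \{\,n \geq 1 : n \text{ odd}\,\}$, and the remaining $i=2$ singular hyperplanes are exactly $H_{2,2k}$ for $k \geq 0$.

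For the case $\alpha_2 - \alpha_1$ rational and different from $0, 1/2$, the interval constraint gives $\alpha_2 - \alpha_1 \in \Q \cap (-1,1) \setminus \{0, 1/2\}$, so in particular $\alpha_2 - \alpha_1 \notin \{0, 1/2, 1\}$. By \thmref{Inkeri}, no Bernoulli polynomial can vanish at such a rational number, hence $K = \emptyset$. Therefore every $H_{2,k}$ with $k \geq 0$ is a genuine pole, and combined with $H_{1,0}$ and the $H_{i,k}$ for $3 \leq i \leq r$, one obtains the stated list.

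There is really no substantive obstacle here: the only computations are the two classical evaluations $B_n(0) = B_n$ and $B_n(1/2) = (2^{1-n}-1) B_n$, together with the vanishing/non-vanishing pattern of the Bernoulli numbers $B_n$, and then a direct appeal to Inkeri's classification of rational zeros. The proof will be essentially a short bookkeeping argument comparing the complement $\N \setminus K$ with the index sets appearing in the corollary.
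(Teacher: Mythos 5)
Your proposal is correct and follows exactly the route the paper intends: the paper gives no separate proof of this corollary beyond the remark that it follows from Theorem~\ref{multiple-Hurwitz} via Inkeri's theorem, and your case-by-case determination of $K=\{n: B_n(\alpha_2-\alpha_1)=0\}$ (using $B_n(0)=B_n$, $B_n(1/2)=(2^{1-n}-1)B_n$, and the classification of rational zeros) is precisely that deduction. The only cosmetic difference is that the non-vanishing for even $n$ can also be read off directly from the ``precisely in the following cases'' clause of \thmref{Inkeri}, without invoking the identity $B_n(1/2)=(2^{1-n}-1)B_n$.
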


\bigskip

\noindent
{\bf Acknowledgements.}
Both the authors would like to thank the Institute of Mathematical Science
(IMSc) where this work was done. Part of this work was supported by 
Number Theory plan project of DAE and a SERB grant.  
First author would also like to thank ICTP 
where part of the final draft was written.
The second author would like to thank the organisers
K. Soundararajan and J-M Deshouillers of a conference
in number theory held at IMSc during December 14th-18th, 2015
where he was allowed to present this work. We thank 
the referee for bringing a number of references to our notice.

\end{document}